\theoremstyle{plain} {
%\swapnumbers
  \newtheorem{thm}{Theorem}[section]
  \newtheorem{defn}[thm]{Definition}
  \newtheorem{cor}[thm]{Corollary}
  \newtheorem{lem}[thm]{Lemma}
  \newtheorem{prop}[thm]{Proposition}
  \theoremstyle{definition}
  \newtheorem{rem}[thm]{Remark}
    \newtheorem{constr}[thm]{Construction}
  \theoremstyle{plain}
  \newtheorem{clm}[thm]{Claim}

}
\renewcommand{\subsubsection}{\sssection\rm}
\newcommand{\bG}{\mathbf G}
\renewcommand{\P}{\mathbb P}
\DeclareMathOperator{\spec}{Spec}
\newcommand{\can}{\text{\rm can}}
\newcommand{\id}{\text{\rm id}}
\newcommand{\pr}{\text{\rm pr}}
\newcommand{\inc}{\text{\rm inc}}
\newcommand{\const}{\text{\rm const}}
\newcommand{\Spec}{\text{\rm Spec}}
\newcommand{\Aff}{\mathbf {A}}
\newcommand{\Pro}{\mathbf {P}}
\newcommand \xra {\xrightarrow }
\newcommand \hra {\hookrightarrow }
\newcommand{\ttf}{{\text{f}}}
\renewcommand{\P}{\mathbb P}
\newcommand\mydim{\text{\rm dim}}
\renewcommand \id{\operatorname{id}}
\renewcommand \phi\varphi
\newcommand{\et}{\text{\rm\'et}}
\newcommand{\ZZ}{\mathbb Z}
\begin{document}

\title{Nice triples and Grothendieck--Serre's conjecture concerning principal G-bundles over reductive group schemes
}

\author{Ivan Panin\footnote{The author acknowledges support of the
RNF-grant 14-11-00456.}
%RFBR grant 13-01-00429-a}
}

%\date{22.11.2012}

%\c{C}
\maketitle

\begin{abstract}
%In the series of papers \cite{Pan1}, \cite{Pan2}, \cite{Pan3}
In a series of papers \cite{Pan0}, \cite{Pan1}, \cite{Pan2}, \cite{Pan3} we give a detailed and better structured
proof of the Grothendieck--Serre's conjecture
for semi-local regular rings containing a finite field. The outline of the proof
is the same as in \cite{P1},\cite{P2},\cite{P3}.
If the semi-local regular ring contains an infinite field,
then the conjecture is proved in \cite{FP}. {\it Thus the conjecture
is true for regular local rings containing a field.
}

The present paper is the one \cite{Pan1} in that new series.
Theorem \ref{MainHomotopyIntrod} is one of the main result of the paper.
It is also one of the key steps
in the proof of the Grothendieck--Serre's conjecture
for semi-local rings containing a field (see \cite{Pan3}).
The proof of the main theorem is completely geometric.
It is based on an extension of theory of nice triples from \cite{PSV}
and \cite{P}.
In turn the theory of nice triples is inspired by
the Voevodsky theory of standart triples \cite{V}.
Our refinement of that Voevodsky theory
is based on the use of Artin's elementary fibrations,
on geometric lemma
\cite[Lemma 8.2]{OP2} and construction \cite[Constr. 4.2]{P}.
The latter construction is taken from \cite[the proof of lemma 8.1]{OP2}).
%%%Working over a finite field we use Bertini type theorems
%%%due to Poonen and Charles \cite{} and \cite{}.
%%%However it does not solve all the difficulties.
%%%To overcome some of them we develop a technic
%%%of finite \'{e}tale correspondences.
%Let G be a reductive group scheme over the Zariski local scheme $U$
%of a closed point on a $k$-smooth irreducible variety $X$.
%Given a principal G-bundle $\mathcal G$ over $X$,
%which is trivial on an non-empty Zariski subset,
%we include that bundle into a family $\mathcal G_t$
%over the affine line $\mathbb A^1_U$ such that
%$\mathcal G_t|_{U\times 0}=\mathcal G$ and
%$\mathcal G_t$ is trivial on the principal open subset
%corresponding to a monic polinomial $h$. Moreover $h(1)$
%is a unit.
\end{abstract}

\section{Main results}\label{Introduction}
Let $R$ be a commutative unital ring. Recall that an $R$-group scheme $\bG$ is called \emph{reductive},
if it is affine and smooth as an $R$-scheme and if, moreover,
for each algebraically closed field $\Omega$ and for each ring homomorphism $R\to\Omega$ the scalar extension $\bG_\Omega$ is
a connected reductive algebraic group over $\Omega$. This definition of a reductive $R$-group scheme
coincides with~\cite[Exp.~XIX, Definition~2.7]{SGA3}.
A~well-known conjecture due to J.-P.~Serre and A.~Grothendieck
(see~\cite[Remarque, p.31]{Se}, \cite[Remarque 3, p.26-27]{Gr1}, and~\cite[Remarque~1.11.a]{Gr2})
asserts that given a regular local ring $R$ and its field of fractions~$K$ and given a reductive group scheme $\bG$ over $R$, the map
\[
  H^1_{\text{\'et}}(R,\bG)\to H^1_{\text{\'et}}(K,\bG),
\]
induced by the inclusion of $R$ into $K$, {\it has a trivial kernel.}
%The following theorem, which is the main result of the present paper,
%asserts that this conjecture holds,
%provided that $R$ contains {\bf a finite field}.
If $R$ contains an infinite field, then the conjecture is proved in [FP].

For a scheme $U$ we denote by $\mathbb A^1_U$ the affine line over $U$ and by $\P^1_U$ the projective line over $U$.
Let $T$ be a $U$-scheme. By a principal $\bG$-bundle over $T$ we understand a principal $\bG\times_UT$-bundle.
We refer to
\cite[Exp.~XXIV, Sect.~5.3]{SGA3}
for the definitions of
a simple simply-connected group scheme over a scheme
and a semi-simple simply-connected group scheme over a scheme.
%(see~\cite[Exp.~XXIV, Sect.~5.3]{SGA3} for the definition).

%%%In Section~\ref{Reducing_MainThm1_to_two_others}
%%%we deduce Theorem~\ref{MainThm1} from the following three results.
%the first two of them are of independent interest

%In Section~\ref{Reducing_MainThm1_to_two_others}
%we deduce Theorem~\ref{MainThm1} from the following two results of independent interest

%~\ref{sect:redtopsv}
%(cf.~\cite[Thm.1.3]{PSV}).
\begin{thm}
\label{MainHomotopyIntrod}
Let $k$ be a field. Let $\mathcal
O$ be the semi-local ring of finitely many {\bf closed points} on a
$k$-smooth irreducible affine $k$-variety $X$.
%and let $K$ be its
%field of fractions.
Set $U=\spec \mathcal O$.
%Let $G$ be a simple simply connected
%group scheme over $\mathcal O$.
%Let $k$, $\mathcal O$, $K$ be the same as
%in Theorem \ref{MainThmGeometric}.
%%%Let $\bG$ be {\bf a simple simply-connected} group scheme over $U$.
%Let $\bG$ be a reductive
%group scheme over $\mathcal O$.
Let $\bG$ be a reductive
group scheme over $U$.
%%%$\mathcal O$.
Let $\mathcal G$ be a principal $\bG$-bundle over $U$
%$\mathcal O$
trivial over the generic point of $U$.
%$K$.
Then there
exists a principal $\bG$-bundle $\mathcal G_t$ over the affine line $\mathbb A^1_U=\spec {\mathcal O}[t]$
%$\mathcal O[t]$
and a monic polynomial $h(t) \in \mathcal O[t]$ such
that
\par
(i) the $\bG$-bundle $\mathcal G_t$ is trivial over the open subscheme $(\mathbb A^1_U)_h$ in $\mathbb A^1_U$ given by $h(t)\ne0$;
%$\mathcal O[t]_{h(t)}$, where $\mathcal O[t]_{h(t)}$ is the localization of $\mathcal O[t]$;
\par
(ii) the restriction of $\mathcal G_t$ to $\{0\}\times U$ coincides
with the original $\bG$-bundle $\mathcal G$.
\par
(iii) $h(1) \in \mathcal O$ is a unit.
\end{thm}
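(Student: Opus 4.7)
The plan is to combine Artin's elementary fibrations with the theory of nice triples to reduce the problem, after a finite change of base on $\mathbb A^1_U$, to a patching argument. First, since $\mathcal G$ is generically trivial on the semi-local $U=\spec\mathcal O$, I would spread it out to a principal $\bG$-bundle $\mathcal G_X$ on some affine open neighborhood $X_0\subset X$ of the given closed points, together with a trivialization on $X_0\setminus Y$, where $Y\subset X_0$ is a closed subscheme not containing any of the given closed points. Shrinking $X_0$ if necessary, I may assume $\mathcal G_X$ is defined and $\bG$ extends to a reductive group scheme $\bG_{X_0}$ over $X_0$ whose pullback to $U$ is the original $\bG$.

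Next, using Artin's theory of elementary fibrations combined with the geometric lemma \cite[Lemma 8.2]{OP2}, I would produce an elementary fibration $p:X_0\to S$ (or a closely related projection) such that the restriction of $Y$ to a neighborhood of the chosen closed points is finite over $S$, and such that after base change along $U\to S$ one obtains a \emph{nice triple} in the sense of \cite{PSV}, \cite{P}: a smooth affine $U$-scheme $\mathcal X$, a finite surjective $U$-morphism $\pi:\mathcal X\to\mathbb A^1_U$, a closed subscheme $\mathcal Z\subset\mathcal X$ finite over $U$ mapping isomorphically to its image in $\mathbb A^1_U$, and a $U$-morphism $q:\mathcal X\to X_0$ through which $\mathcal Z$ contains the preimage of $Y$, together with the additional data of a section $\Delta:U\to\mathcal X$ over $0\in\mathbb A^1_U$ lifting the structural inclusion $U\hookrightarrow X_0$. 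Applying Construction \cite[Constr. 4.2]{P} (originating in \cite[proof of Lemma 8.1]{OP2}) I may further arrange that $\pi$ is \'etale along $\mathcal Z$ and that $\pi(\mathcal Z)$ is cut out by a monic polynomial $h(t)\in\mathcal O[t]$; by a general-position argument on $S$ one also arranges $h(1)\in\mathcal O^*$.

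With these data in place, pulling $\mathcal G_X$ back along $q$ yields a principal $\bG$-bundle $\mathcal G_{\mathcal X}=q^*\mathcal G_X$ on $\mathcal X$ which is trivial on $\mathcal X\setminus \mathcal Z$ (since its non-triviality locus is contained in $q^{-1}(Y)\subset\mathcal Z$), and whose pullback along the section $\Delta$ recovers $\mathcal G$. The finite morphism $\pi$, being \'etale near $\mathcal Z$ with $\mathcal Z\to\pi(\mathcal Z)$ an isomorphism, is a Nisnevich distinguished square: the square obtained from the pair $(\pi^{-1}((\mathbb A^1_U)_h),\mathcal X)\to((\mathbb A^1_U)_h,\mathbb A^1_U)$ is elementary Nisnevich. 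Using faithfully flat / Nisnevich descent, the bundle $\mathcal G_{\mathcal X}$ and the trivial bundle on $(\mathbb A^1_U)_h$ glue along their common restriction to $\pi^{-1}((\mathbb A^1_U)_h)$ (where $\mathcal G_{\mathcal X}$ is already trivial) to produce the desired bundle $\mathcal G_t$ on $\mathbb A^1_U$, trivial over $(\mathbb A^1_U)_h$. Its restriction to $\{0\}\times U$ coincides with $\Delta^*\mathcal G_{\mathcal X}=\mathcal G$, yielding (i) and (ii); property (iii) comes from the general-position choice made in constructing $h$.

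The main obstacle is the construction of the nice triple, i.e.\ arranging simultaneously that (a) $\pi:\mathcal X\to\mathbb A^1_U$ is finite surjective and \'etale along $\mathcal Z$, (b) $\mathcal Z\to\pi(\mathcal Z)$ is an isomorphism cut out by a monic polynomial, (c) the section $\Delta$ over $0$ exists lifting the canonical inclusion $U\hookrightarrow X_0$, and (d) the image of $\mathcal Z$ avoids $t=1$. This is precisely what the combination of Artin's elementary fibrations, the geometric lemma \cite[Lemma 8.2]{OP2}, and Construction \cite[Constr. 4.2]{P} is designed to achieve; everything else is formal Nisnevich descent. The restriction to closed points (rather than arbitrary points) in the hypothesis is what guarantees enough room in $X$ to find such an elementary fibration in the first place.
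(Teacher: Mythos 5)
There is a genuine and central gap: you never address the problem of \emph{equating group schemes}, which is in fact the bulk of the technical work in the paper.

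Your argument pulls $\mathcal G_X$ back along $q:\mathcal X\to X_0$ and then proposes to glue $q^*\mathcal G_X$ on $\mathcal X$ with the trivial bundle on $(\mathbb A^1_U)_h$ over a Nisnevich square. But $q^*\mathcal G_X$ is a torsor under the group scheme $q^*\bG_{X_0}$, while the trivial bundle on $(\mathbb A^1_U)_h$ is a torsor under the \emph{constant} group scheme $\pr_U^*\bG_U$. Over $\mathcal X$ these are two different reductive group schemes, $q^*\bG_{X_0}$ and $q_U^*\bG_U$; they agree only when restricted to $\Delta(U)$. For the Nisnevich patching of torsors along $(\mathbb A^1_U)_h\hookleftarrow\mathcal X_{gh}\hookrightarrow\mathcal X_g$ to even be well-posed, one must first identify $q^*\bG_{X_0}$ with the constant group $q_U^*\bG_U$ over (a refinement of) $\mathcal X$, via a group-scheme isomorphism $\Phi$ that restricts to the identity over $\Delta(U)$ — the latter so that the end of the argument, $\mathcal G_t|_{0\times U}\cong\mathcal G$, goes through. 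This isomorphism does \emph{not} exist on $\mathcal X$ in general; one has to pass to a further finite \'etale refinement of the nice triple to obtain it. That is precisely Theorem \ref{PropEquatingGroups2} and Theorem \ref{equating3_triples}, packaged as item (v) of Theorem \ref{Major2} (equivalently item (g) of Theorem \ref{equating3}), and it is where the real content lies — including the finite-field Bertini arguments of Poonen, needed in Lemma \ref{OjPan} to produce the finite \'etale cover $\tilde S\to S$ and its section over $T$. Your proposal silently treats $q^*\bG_{X_0}$ and the constant group as the same object, so the gluing step is not actually well-defined.

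A secondary, smaller gap: you dispose of $h(1)\in\mathcal O^{\times}$ by ``a general-position argument on $S$.'' In the paper this is not a separate general-position step; it is encoded in the construction of $\sigma$ through properties (e), (f), (a) of Theorem \ref{equating3} (the fibre $\sigma^{-1}(\{1\}\times U)$ avoids $\mathcal Z'$), which in the finite-field case again rests on the careful choice of the nice-triple refinement satisfying conditions $(1^*)$, $(2^*)$. Over a finite field there is not enough ``general position'' available without those conditions, so this claim needs justification rather than appeal to genericity.
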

If the field $k$ is infinite a weaker result is proved in
\cite[Thm.1.2]{PSV}.
The proof of Theorem
\ref{MainHomotopyIntrod}
is given in Section
\ref{Section_Proof_Of_Theorem_1_5}.
%In the constant group case
It is easily derived
from Theorem \ref{Major2} and even from the following result,
which is weaker than  Theorem \ref{Major2}.
%we reproduce right below for the convinience
%of the reader.

\begin{thm}[Geometric]\label{MajorIntrod}
Let $X$ be an affine $k$-smooth irreducible $k$-variety, and let $x_1,x_2,\dots,x_n$ be closed points in $X$.
Let $U=Spec(\mathcal O_{X,\{x_1,x_2,\dots,x_n\}})$ and $\textrm{f}\in k[X]$ be
a non-zero function vanishing at each point $x_i$.
Let $\bG$ be a reductive group scheme over $X$, $\bG_U$ be its restriction to $U$.
Then
there is a monic polinomial $h\in O_{X,\{x_1,x_2,\dots,x_n\}}[t]$,
a commutative diagram
of schemes with the irreducible affine $U$-smooth $Y$
\begin{equation}
\label{SquareDiagram2_2}
    \xymatrix{
       (\Aff^1 \times U)_{h}  \ar[d]_{inc} && Y_h:=Y_{\tau^*(h)} \ar[ll]_{\tau_{h}}  \ar[d]^{inc} \ar[rr]^{(p_X)|_{Y_h}} && X_f  \ar[d]_{inc}   &\\
     (\Aff^1 \times U)  && Y  \ar[ll]_{\tau} \ar[rr]^{p_X} && X                                     &\\
    }
\end{equation}
and a morphism $\delta: U \to Y$ subjecting to the following conditions:
\begin{itemize}
\item[\rm{(i)}]
the left hand side square
is an elementary {\bf distinguished} square in the category of affine $U$-smooth schemes in the sense of
\cite[Defn.3.1.3]{MV};
\item[\rm{(ii)}]
$p_X\circ \delta=can: U \to X$, where $can$ is the canonical morphism;
\item[\rm{(iii)}]
$\tau\circ \delta=i_0: U\to \Aff^1 \times U$ is the zero section
of the projection $pr_U: \Aff^1 \times U \to U$;
\item[\rm{(iv)}] $h(1)\in \mathcal O[t]$ is a unit;
\item[\rm{(v)}] for $p_U:=pr_U\circ \tau$ there is a $Y$-group scheme isomorphism
$\Phi: p_U^*(\bG_U) \to p_X^*(\bG)$ with $\delta^*(\Phi)=id_{\bG_U}$.
\end{itemize}
%%%the left hand side square
%%%is an elementary {\bf distinguished} square in the category of affine $U$-smooth schemes in the sense of
%%%\cite[Defn.3.1.3]{MV}. Moreover,
%if
%$pr_U: \Aff^1 \times U \to U$ is the projection,
%then there is a section $\delta: U\to Y$ of the morphism
%$p_U=pr_U\circ \tau$ such that
%$p_X\circ \delta=can: U \to X$, where $can$ is the canonical morphism,
%and $\tau\circ \delta=i_0: U\to \Aff^1 \times U$, where $i_0$ is the zero section
%of the projection $pr_U: \Aff^1 \times U \to U$.
%Finally, $h(1) \in \mathcal O$ is a unit.
\end{thm}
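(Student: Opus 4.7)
\smallskip
\noindent\emph{Proof plan.} The plan is to realise $Y$ as a principal open subscheme of an Artin-type fibre product sitting over $U$, to apply the geometric lemma of \cite{OP2} to produce the finite morphism $\tau$ with a controlled $h$ at $t=1$, and finally to build $\Phi$ by identifying the two pull-backs of $\bG$ on $Y$.

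First, I would shrink $X$ to an affine open containing the $x_i$ and then apply Artin's theorem on elementary neighbourhoods to produce a smooth morphism $q \colon X \to S$ of relative dimension one, where $S$ is a smooth irreducible affine $k$-scheme and $X$ is an open subscheme of a smooth projective relative curve $\bar X/S$ with finite \'etale boundary. The canonical map $\can \colon U \to X$ composed with $q$ gives $U \to S$, and I would then form the fibre product $\mathcal X := X \times_S U$. Then $\mathcal X$ is a smooth affine relative curve over $U$, and the pair $(\can, \id_U) \colon U \to \mathcal X$ is a canonical section $\delta$ satisfying $p_X \circ \delta = \can$, where $p_X$ denotes the first projection. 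This gives (ii) for free.

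Next, the geometric lemma \cite[Lemma 8.2]{OP2} combined with \cite[Constr.~4.2]{P} would be applied to the relative curve $\mathcal X \to U$, the section $\delta$, and the pull-back $p_X^*(f)$. I expect this to yield a principal open affine $Y \subset \mathcal X$ containing $\delta(U)$, a finite morphism $\tau \colon Y \to \mathbb A^1_U$ that is \'etale at $\delta(U)$ and satisfies $\tau \circ \delta = i_0$, together with a monic polynomial $h \in \mathcal O[t]$ such that $h(1) \in \mathcal O^\times$ and $\tau$ restricts to an isomorphism $V(\tau^*(h)) \xrightarrow{\sim} V(h)$, while being \'etale on all of $Y$. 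This verifies (i), (iii) and (iv) and gives the elementary distinguished square in the sense of \cite[Defn.~3.1.3]{MV}.

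Finally, to build the group-scheme isomorphism $\Phi$ of (v), I would exploit the fibre-product structure $Y \subset X \times_S U$. The two reductive group schemes $p_U^*(\bG_U)$ and $p_X^*(\bG)$ on $Y$ have $\delta$-pull-backs equal to $\bG_U$ by construction. The idea is, after possibly shrinking $S$ and $X$ so that $\bG$ descends through $q$ (e.g.\ using local constancy of the root datum of a reductive group scheme on a connected smooth base), to obtain both $p_U^*(\bG_U)$ and $p_X^*(\bG)$ as pull-backs of a single reductive group scheme on $S$ along the same composition $Y \to S$, and then patch the induced identification along $\delta(U)$. The main obstacle, I expect, is producing via the geometric lemma a monic $h$ with $h(1) \in \mathcal O^\times$ simultaneously with the elementary distinguished condition (i); this is the refinement over \cite[Thm.~1.2]{PSV} that makes the result valid beyond the infinite-field case, and it is exactly where \cite[Constr.~4.2]{P} must be tuned so that the fibre over $t = 1$ lies in the locus where $\tau$ is a local isomorphism. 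A secondary but non-trivial subtlety is the group-scheme descent underpinning (v), which must be arranged compatibly with all previous shrinkings.
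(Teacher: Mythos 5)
Your scaffolding---shrinking $X$, invoking Artin's elementary fibrations, forming the relative curve $\mathcal X = X\times_S U$ with its diagonal section, and appealing to \cite[Lemma 8.2]{OP2} and \cite[Constr.~4.2]{P}---does mirror the first step (Proposition~\ref{BasicTripleProp_1}) and the general outline of Section~\ref{First Application}. But there is a genuine gap in your treatment of item (v), and it forces a reordering of your whole plan.

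The two group schemes $p_U^*(\bG_U)$ and $p_X^*(\bG)$ on $Y$ are \emph{forms} of each other, with equal restriction along $\delta(U)$, but they are not isomorphic a priori. Local constancy of the root datum (or descending $\bG$ to $S$) would at best tell you that the two are twisted forms of one another; it cannot produce the isomorphism $\Phi$ with $\delta^*(\Phi)=\id_{\bG_U}$. In the paper this is handled by the equating theorem (Theorem~\ref{PropEquatingGroups2}), whose output is a \emph{finite \'etale cover} of a semi-local neighbourhood in $\mathcal X$, together with a section over $\Delta(U)$, along which the two group schemes become isomorphic compatibly with $\delta$. Applying \cite[Constr.~4.2]{P} spreads this \'etale cover out to an \'etale morphism $\theta\colon\mathcal X''\to\mathcal X$. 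This is not a shrinking of $S$ or of $X$: it genuinely replaces the ambient relative curve by an \'etale cover, so the $Y$ you eventually obtain is a principal open of $\mathcal X''$, \emph{not} of $X\times_S U$. Your claim that $Y\subset\mathcal X$ is therefore false in general, and the ``secondary subtlety'' you flag at the end is in fact the central point.

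As a consequence the order of operations has to be reversed. One first applies Theorem~\ref{equating3_triples} to produce the morphism of nice triples $\theta$ that both equates the group schemes and forces the new triple to satisfy conditions $(1^*)$, $(2^*)$ of Definition~\ref{Conditions_1*and2*} (this is where Poonen's Bertini theorem enters, via Lemma~\ref{OjPan}); only then does one run \cite[Thm.~3.8]{P} to build the finite surjective morphism $\sigma$, whence $\tau$, $g$, and $h$, and extract items (i)--(iv) from the resulting Corollary~\ref{ElementaryNisSquareNew_12}. The conditions $(1^*)$--$(2^*)$ over a finite residue field are exactly what lets $\sigma|_{\mathcal Z'}$ be a closed embedding with the fibre over $1$ missing $\mathcal Z'$, which is what makes $h(1)$ a unit. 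If you first construct $\tau$ and $h$ on $X\times_S U$ and afterwards try to repair (v), passing to the required \'etale cover in general destroys the properties of $\tau$ and $h$ that you have just secured, so the two steps cannot be decoupled in the order you propose.
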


{\it A sketch of the proof of Theorem \ref{MainHomotopyIntrod}}.
In general, $\bG$ does not come from $X$. However we may assume, that
as $\bG$, so $\mathcal G$ are defined over $X$ and $\bG$ is reductive over $X$.
Say, let $\mathcal G'$ is a principal $\bG$ on $X$ with $\mathcal G'|_U=\mathcal G$.
In this case there two reductive group schemes on $Y$.
Namelly, $p^*_U(\bG|_U)$ and $p^*_X(\bG)$.
Clearly, they coincides when restricted to $\delta(U)$.
By the item (v) of Theorem \ref{MajorIntrod} the scheme $Y$ can be chosen such that
two reductive group schemes
$p^*_U(\bG|_U)$ and $p^*_X(\bG)$
on $Y$ are isomorphic via
an isomorphism $\Phi$  and $\Phi$ is such that its restriction to $\delta(U)$
is the identity. Take again $p^*_X(\mathcal G')$ and regard it as
a principal $p^*_U(\bG|_U)$-bundle using the isomorphism $\Phi$.
Denote that principal $p^*_U(\bG|_U)$-bundle $_{_{U}}p^{*}_X(\mathcal G)$.
It is trivial on $Y_h$, since $p^*_X(\mathcal G')$ is trivial on $Y_h$.
Take the trivial $pr^*_U(\bG|_U)$-bundle on $\Aff^1\times U$
and glue it with $_{_{U}}p^{*}_X(\mathcal G)$ via an isomorphism over $Y_h$.
This way we get a principal $\bG|_U$-bundle $\mathcal G_t$ over $\Aff^1\times U$.
Clearly, it is the desired one.
The polinomial $h$ one should take as in Theorem \ref{MajorIntrod}.
Details are given in Section
\ref{Section_Proof_Of_Theorem_1_5}.

The article is organized as follows.
%Section \ref{ElementaryFibrations} contains a recollection
%about elementary fibrations. 
In Section \ref{NiceTriples}
we recall definition of nice triples from \cite{PSV}
and inspired by Voevodsky notion of perfect triples.
%We state in that section two theorems:
%\ref{ElementaryNisSquare_1_triples}
%and
%\ref{ThEquatingGroups_1}.
%In Section \ref{OjPanConstruction} we recall a construction from \cite{OP2}.
%It turns out that the construction
%plays a crucial role in our analyses.
%That construction together with
%Proposition \ref {prop:refining_triples}
%gives a tool to obtain nice triple with
%prescribed properties.
%In Section \ref{SectElemNisnevichSquare}
%Theorem \ref{ElementaryNisSquare_1_triples} is proved.
%In Section \ref{Proof_of Theorem equating3_triples}
%Theorem \ref{ThEquatingGroups_1} is proved.
%In Section \ref{Reducing Theorem_MainHomotopy} a geometric presentaion
%theorem (Theorem \ref{ElementaryNisSquare_1}) is proved
%{\it and Theorem \ref{MajorIntrod} is proved too.
%}
%It is very easy to derive Theorem \ref{Major} in constant group case.
%Instead that derivation is sketched in the Introduction above.

In Section \ref{SecEquatingGroups} Theorem \ref{PropEquatingGroups2} on equating group schemes
is proved. In Section \ref{NiceTriplesAndGr_Schemes} Theorem \ref{equating3_triples} is proved.
%In Section \ref{EqGroupSchemeMorph} Theorem \ref{PropEquatingGroups_1_1_1}
%on equating group scheme morphisms is proved. The latter Section is not necessary
%for the present paper. However it is short and very essential for a forthcoming paper
%on Grothendieck--Serre's conjecture \cite{Pan2}.
%In Section \ref{NiceTriplesAndGr_Schemes}  Theorem
%\ref{equating3_triples} is proved.
%Theorem \ref{equating3_triples} states that there is
%a morphism of nice triples equating two morphisms of reductive group schemes.
In Section \ref{First Application} a first application of the above machinery is given.
Namely, Theorems \ref{equating3} and \ref{Major2} are proved.
Theorem \ref{equating3} is a one more geometric presentation theorem.
Theorem \ref{Major2} is a stronger version of Theorem \ref{MajorIntrod}.
{\it Finally, in Section \ref{Section_Proof_Of_Theorem_1_5}
Theorem \ref{MainHomotopyIntrod}
is proved.
}
In Section \ref{PSVfinite_f} the main result of \cite{PSV}
{\it is extended to the case of an arbitrary base field}
$k$.
Namelly, the following Theorem is proved there
 \begin{thm}\label{PSV}
Let $k$ be a field. Let $\mathcal O$ be the semi-local ring of finitely many closed points on a $k$-smooth irreducible affine $k$-variety $X$
and let $K$ be its field of fractions. Let $\bG$ be an isotropic simple simply connected group scheme over $\mathcal O$.
Then for any Noetherian $k$-algebra $A$ the map
$$\text{H}^1_{et}(\mathcal O \otimes_k  A,G) \to \text{H}^1_{et}(K\otimes_k  A,G),$$
induced by the inclusion $\mathcal O$ into $K$, has trivial kernel.
\end{thm}
Stress the following. If the field $k$ is finite,
and $A=k$ and the group scheme $G$ comes from the field $k$
and it is simple simply connected,
then the latter theorem is an unpublished result due to Gabber
(see also \cite{Pan0}).

%The Appendix A contains the proof of Lemma
%\ref{SmallAmountOfPoints}.
%The Appendix B contains the proof of the proposition \ref{ArtinsNeighbor}.\\

The author thanks A.~Suslin for his interest in the topic of the present article. He
also thanks to A.Stavrova for paying his attention to Poonen's works on Bertini type theorems
for varieties over finite fields. He thanks D.Orlov for useful comments concerning
the weighted projective spaces tacitely involved in the construction of elementary fibrations.
He thanks M.Ojanguren for many inspiring ideas arising from our joint works with him.
%Finally, the author thanks the Referee for many valuable comments which resulted
%in the significant improvement of the final version of the present paper.

\section{Nice triples}
\label{NiceTriples}

In the present section we recall and study certain collections
of geometric data and their morphisms. The concept of a {\it nice
triple\/} was introduced in
\cite[Defn. 3.1]{PSV}
and is very similar to that of a {\it standard triple\/}
introduced by Voevodsky \cite[Defn.~4.1]{V}, and was in fact
inspired by the latter notion. Let $k$ be a field, let $X$
be a $k$-smooth
%{\bf GEOMETRICALLY ???}
irreducible {\bf affine} $k$-variety, and let
$x_1,x_2,\dots,x_n\in X$ be {\bf a family of closed points}. Further, let
$\mathcal O=\mathcal O_{X,\{x_1,x_2,\dots,x_n\}}$ be the
corresponding geometric semi-local ring.

After substituting $k$ by its algebraic closure $\tilde k$ in $k[X]$,
we can assume that $X$ is a $\tilde k$-smooth {\it geometrically irreducible} affine $\tilde k$-scheme.
{\bf The geometric irreducibility of $X$ is required in the proposition \ref{ArtinsNeighbor}
to construct an open neighborhood $X^0$ of the family
$\{x_1,x_2,\dots,x_n\}$ and {\bf an elementary fibration} $p:X^0\to S$,
where $S$ is an open sub-scheme of the projective space
$\Pro^{\mydim X-1}_k$. The proposition \ref{ArtinsNeighbor}
is used in turn to prove the proposition \ref{BasicTripleProp_1}.
To simplify the notation, we will
continue to denote this new $\tilde k$ by $k$.
%$U:= \text{Spec}(\mathcal O_{X, \{x_1,x_2,\dots,x_n\}})$.
\begin{defn}
\label{DefnNiceTriple} Let $U:=\text{\Spec}(\mathcal
O_{X,\{x_1,x_2,\dots,x_n\}})$.
%$J= \cap^n_{i=1} \mathfrak m_i, l=\mathcal O_{X, \{x_1,x_2,\dots,x_n\}}/J$.
A \emph{nice triple} over $U$ consists of the following data:
\begin{itemize}
\item[\rm{(i)}] a smooth morphism $q_U:\mathcal X\to U$, where $\mathcal X$ is an irreducible scheme,
\item[\rm{(ii)}] an element $f\in\Gamma(\mathcal X,\mathcal
O_{\mathcal X})$,
%a closed codimension one subscheme
%$\mathcal Z$,
\item[\rm{(iii)}] a section $\Delta$ of the morphism $q_U$,
\end{itemize}
subject to the following conditions:
\begin{itemize}
\item[\rm{(a)}]
%{\bf each component of each fibre of the morphism $q_U$
%is geometrically irreducible of dimension one,}
each irreducible component of each fibre of the morphism $q_U$
has dimension one,
%and can be decomposed as
%$pr_U \circ \Pi$,
%where
%$\Pi: \mathcal X \to \Aff^1 \times U$
%is a finite surjective morphism;
\item[\rm{(b)}]
the module
$\Gamma(\mathcal X,\mathcal O_{\mathcal X})/f\cdot\Gamma(\mathcal X,\mathcal O_{\mathcal X})$
is finite as
a $\Gamma(U,\mathcal O_{U})=\mathcal O$-module,
%$\mathcal Z$ is finite and surjective over $U$;
\item[\rm{(c)}]
there exists a finite surjective $U$-morphism
$\Pi:\mathcal X\to\Aff^1\times U$;
\item[\rm{(d)}]
$\Delta^*(f)\neq 0\in\Gamma(U,\mathcal O_{U})$.
\end{itemize}
There are many choices of the morphism $\Pi$. Any of them is regarded as assigned to
the nice triple.
\end{defn}

\begin{rem}\label{rem: NiceTriples}
Since $\Pi$ is a finite morphism, the scheme $\mathcal X$ is affine.
We will write often below $k[\mathcal X]$ for $\Gamma(\mathcal X,\mathcal O_{\mathcal X})$.
The only requirement on the morphism $\Delta$ is this: $\Delta$ is a section of $q_U$.
Hence $\Delta$ is a closed embedding. We write $\Delta(U)$ for the image of this closed embedding.
The composite map $\Delta^* \circ q^*_U: k[\mathcal X]\to \mathcal O$ is the identity.
If $Ker=Ker(\Delta^*)$, then $Ker$ is the ideal defining the closed subscheme $\Delta(U)$ in $\mathcal X$.
\end{rem}
%{\bf A motivation to require the condition $\Delta^*(f)\neq 0\in\Gamma(U,\mathcal O_{U})$
%rather than the one $\Delta^*(f) \in\Gamma(U,\mathcal O_{U})^{\times}$ is this.
%If for the triple $(q_U:\mathcal X\to U,f,\Delta)$ from Section \ref{MainConstruction} one has
%$\Delta^*(f) \in\Gamma(U,\mathcal O_{U})^{\times}$, then
%$\text{f} \in \mathcal O^{\times}$. So, in this case the principal $G$-bundle
%$P$ is trivial already over $\mathcal O \otimes_k A=\mathcal O_{\text{f}} \otimes_k A$.
%And there is nothing to prove. }

%{\bf A comment is in order here. We use nice triples below to solve the following problem.
%Given a smooth affine $U$-group scheme $H$ and given a principal $H$-bundle $\mathcal H$ over $\mathcal X$,
%which is trivial over $\mathcal X_f$, check that the pull-back $\Delta^*(\mathcal H)$ is trivial
%over $U$ (see Section ?). If $\Delta^*(f) \in\Gamma(U,\mathcal O_{U})^{\times}$, then obviously
%$\Delta^*(\mathcal H)$ is trivial over $U$. So, in this case there is nothing to prove. }
\begin{defn}
\label{DefnMorphismNiceTriple}
A \emph{morphism}
between two nice
triples
over $U$
$$(q^{\prime}: \mathcal X^{\prime} \to U,f^{\prime},\Delta^{\prime})\to(q: \mathcal X \to U,f,\Delta)$$
is an \'{e}tale morphism of $U$-schemes $\theta:\mathcal
X^{\prime}\to\mathcal X$ such that
\begin{itemize}
\item[\rm{(1)}] $q^{\prime}_U=q_U\circ\theta$, \item[\rm{(2)}]
$f^{\prime}=\theta^{*}(f)\cdot h^{\prime}$ for an element
$h^{\prime}\in\Gamma(\mathcal X^{\prime},\mathcal O_{\mathcal
X^{\prime}})$,
%$\theta^{-1} (\mathcal Z) \subset \mathcal Z^{\prime}$
%$\theta^{-1} (\mathcal Z)$ is finite surjective over $U$);
\item[\rm{(3)}] $\Delta=\theta\circ\Delta^{\prime}$.
\end{itemize}
\end{defn}
Two observations are in order here.
\par\smallskip
$\bullet$ Item (2) implies in particular that $\Gamma(\mathcal
X^{\prime},\mathcal O_{\mathcal
X^{\prime}})/\theta^*(f)\cdot\Gamma(\mathcal X^{\prime},\mathcal
O_{\mathcal X^{\prime}})$ is a finite
$\mathcal O$-module.
\par\smallskip
$\bullet$ It should be emphasized that no conditions are imposed
on the interrelation of $\Pi^{\prime}$ and $\Pi$.
\par\smallskip

}
%Note that
%$U\times_{\Spec(\tilde k)}$ as $U$-schemes, and the same holds for $X$ instead of $U$.

%Let $k$ be {\bf a finite field}. Fix a smooth geometrically irreducible affine $k$-scheme $X$,
%and a finite family of {\bf closed} points $x_1,x_2, \dots , x_n$ on $X$, and a
%non-zero function $\ttf\in k[X]$, which vanishes at each of $x_i$'s for $i=1,2,\dots,n$.
%Let $\mathcal O=\mathcal O_{X,\{x_1,x_2,\dots,x_n\}}$ be the semi-local ring of the family $x_1,x_2, \dots , x_n$ on $X$,
%$U=Spec(\mathcal O)$ and
Let $U$ be as in Definition \ref{DefnNiceTriple} and
$\can:U\hra X$ be the canonical inclusion of schemes.
%The definition of a nice triple over $U$ is given in 3.1.
%\ref{DefnNiceTriple}.
%The main aim of the present section is to prove the following
\begin{defn}
\label{SpecialNiceTriples}
A nice triple
$(q_U: \mathcal X \to U, \Delta, f)$ over $U$
is called special if
the set of closed points of $\Delta(U)$ is
contained in the set of closed points of $\{f=0\}$.
\end{defn}

\begin{rem}
\label{NiceToSpecialNice}
Clearly the following holds:
let $(\mathcal X,f,\Delta)$ be a special nice triple
over $U$ and let
$\theta: (\mathcal X^{\prime},f^{\prime},\Delta^{\prime}) \to (\mathcal X,f,\Delta)$
be a morphism between nice triples over $U$. Then the triple
$(\mathcal X^{\prime},f^{\prime},\Delta^{\prime})$
is a special nice triple over $U$.
\end{rem}

\begin{prop}
\label{BasicTripleProp_1}
One can shrink $X$ such that $x_1,x_2, \dots , x_n$ are still in $X$ and $X$ is affine, and then construct a special nice triple
$(q_U: \mathcal X \to U, \Delta, f)$ over $U$ and an essentially smooth morphism $q_X: \mathcal X \to X$ such that
$q_X \circ \Delta= can$, $f=q^*_X(\text{f} \ )$.
%and the set of closed points of $\Delta(U)$ is
%contained in the set of closed points of $\{f=0\}$.
\end{prop}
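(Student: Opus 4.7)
The plan is to realize $\mathcal X$ as the base change $X\times_S U$ along an Artin elementary fibration $p\colon X\to S$, and to extract the finite projection to $\Aff^1\times U$ from the construction of \cite[Constr.~4.2]{P} (itself a distillation of the proof of \cite[Lemma~8.1]{OP2}). The function $\mathrm{f}\in k[X]$ tacitly referred to in the statement is taken to be a nonzero regular function vanishing at every $x_i$, as in Theorem~\ref{MajorIntrod}.

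First I would apply Proposition~\ref{ArtinsNeighbor} to replace $X$ by a smaller affine open neighborhood of $\{x_1,\dots,x_n\}$ admitting an elementary fibration $p\colon X\to S$, with $S$ open in $\Pro^{\mydim X-1}_k$; after a further harmless shrinking one keeps $X$ affine and $\mathrm{f}\neq 0$ on $X$. Let $\bar p\colon\bar X\to S$ be the smooth projective compactification of $p$, with $\bar X\setminus X$ finite \'etale surjective over $S$. Set $\mathcal X:=X\times_S U$, and let $q_X\colon\mathcal X\to X$ and $q_U\colon\mathcal X\to U$ be the two projections. Then $q_U$ is a base change of $p$, hence smooth with one-dimensional irreducible fibers, giving condition~(a); the morphism $q_X$ is the base change of the essentially smooth localization $U\to X$ and is therefore essentially smooth. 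The universal property of the fiber product yields a section $\Delta\colon U\to\mathcal X$ of $q_U$ satisfying $q_X\circ\Delta=\can$. Put $f:=q_X^\*(\mathrm{f})$. Then $\Delta^\*(f)=\mathrm{f}|_U\neq 0$ since $X$ is irreducible and $\mathrm{f}\neq 0\in k[X]$, which is condition~(d); and since $\mathrm{f}(x_i)=0$ for each $i$, every closed point of $\Delta(U)$ lies in $\{f=0\}$, which is the specialness condition of Definition~\ref{SpecialNiceTriples}.

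What remains is to verify conditions~(b) and~(c), and I would obtain them simultaneously. By \cite[Constr.~4.2]{P} one can, after an additional shrinking of $S$ (and correspondingly of $X$) around the image of $\{x_1,\dots,x_n\}$, produce a regular function $t\in k[X]$ such that $(t,p)\colon X\to\Aff^1\times S$ is finite surjective; invoking \cite[Lemma~8.2]{OP2} one simultaneously arranges that the closed subscheme $\{\mathrm{f}=0\}\subset X$ is finite over $S$. Base-changing along $U\to S$ then gives the finite surjective $U$-morphism
$\Pi:=\bigl(q_X^\*(t),\,q_U\bigr)\colon\mathcal X\to\Aff^1\times U$,
which is~(c), while the same base change makes $\{f=0\}\subset\mathcal X$ finite over $U$, so that $k[\mathcal X]/f\cdot k[\mathcal X]$ is a finite $\mathcal O$-module, giving~(b). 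The main obstacle is precisely this joint geometric input: producing a single function $t$ making $(t,p)$ fiberwise finite while forcing the vanishing locus of the independently prescribed $\mathrm{f}$ to be proper over $S$. Once that is granted, everything else is formal manipulation with the elementary fibration and base change.
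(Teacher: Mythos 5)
Your overall plan — shrink $X$ to admit an Artin elementary fibration $p\colon X\to S$, set $\mathcal X:=X\times_S U$, let $q_X,q_U$ be the projections, $\Delta$ the diagonal section, and $f:=q_X^*(\mathrm{f})$ — is exactly the approach the paper takes (via \cite[Prop.~6.1]{PSV}, with the PSV references to Prop.~2.3 and 2.4 swapped for the corresponding propositions in \cite{P} to handle a finite base field). Conditions (a), (d), specialness and $q_X\circ\Delta=\can$ all fall out of the base change as you say.

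The one genuine slip is in how you source conditions (b) and (c). You attribute the finiteness of $\{\mathrm{f}=0\}$ over $S$ to \cite[Lemma~8.2]{OP2}, but that lemma does something else entirely: given a closed subset $\mathcal Y\subset\mathcal X$ finite over $U$ and an open $\mathcal V\supset\Pi^{-1}(\Pi(\mathcal Y))$, it supplies a smaller open $\mathcal W$ carrying a new finite surjective projection $\Pi^*\colon\mathcal W\to\Aff^1\times U$. It is used later, in the morphism-of-nice-triples machinery (Theorem~\ref{equating3_triples}), not to control $\{\mathrm{f}=0\}$ over $S$. The correct statement is that Proposition~\ref{ArtinsNeighbor} is already formulated so that the elementary fibration $p\colon X^0\to S$ it produces has $p|_{\{\mathrm{f}=0\}\cap X^0}\colon\{\mathrm{f}=0\}\cap X^0\to S$ finite surjective; this is precisely where the Poonen-Bertini technology over a finite field enters and is what \cite[Prop.~2.3]{P} (replacing \cite[Prop.~2.3]{PSV}) provides. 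Likewise the finite surjective $\pi\colon X\to\Aff^1\times S$ compatible with $p$ comes from \cite[Prop.~2.4]{P} (replacing \cite[Prop.~2.4]{PSV}), not from \cite[Constr.~4.2]{P} — the latter is the construction used at the next stage of the theory, to refine an already-constructed nice triple. Once both facts are invoked from the right sources, base-changing along $p_U\colon U\to S$ yields $\Pi\colon\mathcal X\to\Aff^1\times U$ finite surjective and $\{f=0\}\subset\mathcal X$ finite over $U$, giving (b) and (c) exactly as you describe. You should also fix the order of operations: first apply \ref{ArtinsNeighbor} and shrink, then produce $\pi$, and only then form $\mathcal X=X\times_S U$ — shrinking $X$ after $\mathcal X$ has been fixed would alter the fibre product.
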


\begin{proof}[Proof of Proposition \ref{BasicTripleProp_1}]
If the field $k$ is infinite, then this proposition is proved in
\cite[Prop. 6.1]{PSV}.
So, we may and will assume that $k$ is finite.
To prove the proposition repeat literally the proof of
\cite[Prop. 6.1]{PSV}. One has to replace the references to
\cite[Prop. 2.3]{PSV} and
\cite[Prop.2.4]{PSV}
with the reference to
\cite[Prop. 2.3]{P}
%Propositions
%\ref{ArtinsNeighbor}
and
\cite[Prop. 2.4]{P}
%\ref{CartesianDiagram}
respectively.

\end{proof}

%\begin{equation}
%\xymatrix{U_K\ar[r]^{\bar\pi}\ar[d]&U\ar[d]\\
%{\spe K}\ar[r]^\pi\ar[ur]^y\ar@/^0.8pc/[u]^{\tilde y}
%\ar@/_0.8pc/[u]_{\tilde x}&{\spe k,}\ar@/_0.8pc/[u]_x}
%\end{equation}

Recall the definition \cite[Defn. 3.7]{P}.
If
$U$ as in Definition
\ref{DefnNiceTriple}
then {\it for any $U$-scheme $V$ and any closed point} $u \in U$
set
$V_u=u\times_U V$.
For a finite set $A$ denote $\sharp A$ the cardinality of $A$.

\begin{defn}
\label{Conditions_1*and2*}
%Let $V$ be a semi-local $U$-scheme and $p: V\to U$ be its structure morphism and $s: U\to V$ be a section of $p$.
%Assume that for any closed point $v\in V$ the point $p(v)$ is a closed point of $U$.
%Let $s: U\to V$ be its section.
%We say that the pair $(V,s(U))$
Let $(\mathcal X,f,\Delta)$ be a special nice triple
over $U$.
We say that the triple
$(\mathcal X,f,\Delta)$
satisfies conditions $1^*$ and $2^*$
if either the field $k$ is infinite or (if $k$ is finite) the following holds
\begin{itemize}
\item[$(1^*)$]
for $\mathcal Z=\{f=0\} \subset \mathcal X$ and
for any closed point $u \in U$, any integer $d \geq 1$
%and the vanishing locus $\mathcal Z$ of $f$
%closed subscheme $\mathcal Z$ of $\mathcal X$ defined by $\{f=0\}$
one has
$$\sharp\{z \in \mathcal Z_u| \text{deg}[k(z):k(u)]=d \} \leq \ \sharp\{x \in \Aff^1_u| \text{deg}[k(x):k(u)]=d \}$$
\item[$(2^*)$]
for the vanishing locus $\mathcal Z$ of $f$
%$\mathcal X$ defined by $\{f=0\}$
and for any closed point $u \in U$ the point $\Delta(u) \in \mathcal Z_u$ is the only
$k(u)$-rational point of $\mathcal Z_u=u\times_U \mathcal Z$.
\end{itemize}

\end{defn}

\section{Equating group schemes}
\label{SecEquatingGroups}
%Proof of Theorem \ref{equating3_triples}}
%\label{ProofThEquatingGroups}
The main result of the present section is
Theorem \ref{PropEquatingGroups2}.
%\ref{PropEquatingGroups_1_1_1}.
It is stated and proved at the very end the present section.
We begin with the following result which extends
\cite[Prop.5.1]{PSV}.
%for the proof of a slightly weaker
%statement. The proof of the Theorem is done in the same
%style with some additional technicalities (see Appendix).

\begin{thm}
\label{PropEquatingGroups2}
Let $S$ be a regular semi-local
irreducible scheme.
%Assume that all the closed points of $S$ have {\bf finite residue fields}.
%Let
%$\mu_1: G_1\to C_1$ and
%$\mu_2: G_2\to C_2$
%be two smooth $S$-group scheme morphisms with tori $C_1$ and $C_2$.
Assume
%as well
that $G_1$ and $G_2$ are reductive $S$-group schemes
which are forms of each other.
%Assume that $C_1$ and $C_2$ are forms of each other.
Let $T \subset S$ be a connected non-empty closed
sub-scheme of $S$, and
$\varphi: G_1|_T \to G_2|_T$
%$\psi: C_1|_T \to C_2|_T$
be $T$-group scheme isomorphism.
%$(\mu_2|_{T}) \circ \varphi = \psi \circ (\mu_1|_{T})$.
Then there exists a finite \'{e}tale morphism
$\tilde S \xra{\pi} S$ together
with
a section $\delta: T \to \tilde S$ of $\pi$ over $T$
%with its section $\delta: T \to \tilde S$ over $T$
and $\tilde S$-group scheme isomorphisms
$\Phi: \pi^*{G_1} \to \pi^*{G_2}$
%and
%$\Psi: \pi^*{C_1} \to \pi^*{C_2}$
such that
\begin{itemize}
\item[(i)]
$\delta^*(\Phi)=\varphi$,
%\item[(ii)]
%$\delta^*(\Psi)=\psi$,
%\item[(iii)]
%$\pi^*(\mu_2) \circ \Phi = \Psi \circ \pi^*(\mu_1): \pi^*(G_1) \to \pi^*(C_2)$
\item[(ii)]
the scheme $\tilde S$ is irreducible.
\end{itemize}
\end{thm}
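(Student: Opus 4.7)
The plan is to construct the desired cover from the $S$-scheme of group isomorphisms. Let $Z := \underline{\Iso}_S(G_1, G_2)$ be the $S$-scheme representing the functor $S' \mapsto \Iso_{S'}(G_1 \times_S S', G_2 \times_S S')$. Because $G_1, G_2$ are reductive forms of each other, $Z$ is a torsor under $\underline{\Aut}_S(G_1)$, which is smooth and affine over $S$ (being an extension of the finite \'etale $S$-group scheme $\underline{\Out}_S(G_1)$ by the reductive adjoint quotient $G_1^{\ad}$). Thus $Z \to S$ is itself smooth and affine. The isomorphism $\varphi$ is precisely an $S$-morphism $\varphi : T \to Z$ lifting the inclusion $T \hra S$, equivalently a closed immersion $\varphi : T \hra Z$ over $S$.

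Next I would extend $\varphi$ to a section over a finite \'etale cover in two stages. Stage one exploits the structure of $\underline{\Aut}_S(G_1)$: the quotient $\underline{\Iso}^{\text{ext}}_S(G_1, G_2) := Z/G_1^{\ad}$ is an $\underline{\Out}_S(G_1)$-torsor, hence finite \'etale over $S$. The composite $T \xra{\varphi} Z \to \underline{\Iso}^{\text{ext}}_S(G_1, G_2)$ is a section over $T$; taking the connected (hence, by regularity of $S$, irreducible) component of the target through this section yields an irreducible finite \'etale $\tilde S_0 \to S$ with a section $\delta_0 : T \to \tilde S_0$. Stage two: after base-change to $\tilde S_0$, the pullback $Z_0 := Z \times_S \tilde S_0 \to \tilde S_0$ becomes a torsor under the smooth affine group scheme $G_1^{\ad}|_{\tilde S_0}$, equipped with a section over $T$ (coming from $\delta_0$ and $\varphi$). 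Smoothness of $Z_0 \to \tilde S_0$ around the section, combined with semi-locality of $\tilde S_0$, allows extension of this section to an \'etale neighborhood: pick an affine open $W \subset Z_0$ containing the section together with an \'etale morphism $W \to \Aff^d_{\tilde S_0}$ (with $d$ the relative dimension), lift the $d$ coordinate functions of the section from $\Gamma(T,\mathcal O_T)$ to $\Gamma(\tilde S_0,\mathcal O_{\tilde S_0})$ via surjectivity of restriction, and form the resulting \'etale base change $\tilde S \to \tilde S_0$. A careful choice of the transversal slice (a regular sequence cutting out a finite subscheme near the section) then produces $\tilde S$ that is moreover \emph{finite} \'etale over $\tilde S_0$; composing with $\tilde S_0 \to S$ gives the required finite \'etale irreducible $\pi : \tilde S \to S$ with a section $\delta : T \to \tilde S$ lifting $\varphi$.

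Finally, the construction provides a canonical morphism $\tilde S \to Z$, and pulling back the tautological universal isomorphism on $Z$ (between $G_1 \times_S Z$ and $G_2 \times_S Z$ as $Z$-group schemes) yields $\Phi : \pi^* G_1 \to \pi^* G_2$ over $\tilde S$. By construction, $\delta^*(\Phi) = \varphi$, and $\tilde S$ is irreducible because we have taken a connected component of an \'etale cover of the regular scheme $S$.

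The main obstacle is stage two: producing inside $Z_0$ a \emph{finite} \'etale $\tilde S_0$-subscheme that contains the section over $T$. Smoothness alone gives only an \'etale (a priori non-finite) neighborhood, so one must carefully arrange the transversal slice so as to remain proper over $\tilde S_0$. This is where the semi-local hypothesis on $S$ (and hence on the finite \'etale $\tilde S_0$) is essential, and where the finite \'etale nature of $\underline{\Out}_S(G_1)$ in stage one plays a decisive role: without the outer-automorphism reduction one would only obtain an \'etale cover, insufficient to ensure finiteness of $\pi$.
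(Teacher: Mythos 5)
Your proposal goes wrong at the very first structural claim: for a general \emph{reductive} $S$-group scheme $G_1$, the outer automorphism scheme $\underline{\Out}_S(G_1)$ is \textbf{not} finite \'etale. Finiteness of $\underline{\Out}$ holds for \emph{semi-simple} group schemes (automorphisms of the Dynkin diagram), but a reductive group with a nontrivial radical torus has an infinite outer automorphism group: already for $G_1=\mathbb G_m^2$ one has $\underline{\Out}(G_1)=\underline{\Aut}(G_1)=\GL_2(\ZZ)$ as a constant group scheme, which is neither finite nor even quasi-compact over $S$ (in particular $\underline{\Aut}(G_1)$ fails to be affine, which also undermines the Bertini argument you would need to apply inside $Z$). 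Consequently stage one of your argument does not produce a finite \'etale $\tilde S_0$; the connected component of $Z/G_1^{\ad}$ through your section over $T$ can a priori be an infinite-degree \'etale cover, and none of the later steps repairs this.

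This is precisely the obstruction that the paper's proof is designed to circumvent, and it is why the paper does \emph{not} attack $\underline{\Iso}_S(G_1,G_2)$ directly for reductive $G$. The paper passes to the central isogeny
$\Rad(G_r)\times_S \operatorname{der}(G_r)\to G_r$
with finite multiplicative-type kernel $Z_r$, applies the equating statement separately to the \emph{semi-simple} derived subgroups (Proposition~\ref{PropEquatingGroups}, where $\underline{\Out}$ really is finite \'etale and the Bertini-type Lemma~\ref{OjPan} applies) and to the radicals / kernels of multiplicative type (Proposition~\ref{PropEquatingGroups4}), and then uses the rigidity statement of Proposition~\ref{PropEquatingGroups5} — two morphisms of multiplicative-type group schemes over an irreducible base that agree on a nonempty closed subscheme are equal — to check that the isomorphisms on the pieces glue through the isogeny to an isomorphism of $G_1$ and $G_2$. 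Your proposal has no analogue of this decomposition and gluing step, and without it the argument does not close. A secondary point: even in the semi-simple case, the ``careful choice of the transversal slice'' you invoke in stage two is exactly where the content of this paper lies when the residue fields are finite; there one cannot use a generic hyperplane and must instead appeal to Poonen's Bertini theorem over finite fields, which is what Lemma~\ref{OjPan} supplies.
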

%By Proposition \ref{PropEquatingGroups}
%Theorem \ref{PropEquatingGroups2} holds in the case when the group schemes $G_1$ and $G_2$ are semi-simple.

\begin{prop}
\label{PropEquatingGroups}
Theorem
\ref{PropEquatingGroups2}
holds in the case when the group schemes $G_1$ and $G_2$ are semi-simple.
%Let $S$ be a regular semi-local
%irreducible scheme.
%Assume that all the closed points of $S$ have {\bf finite residue fields}.
%Let $G_1,G_2$ be two
%{\bf semi-simple}
%%simply-connected
%$S$-group schemes which are twisted forms of each other. Further, let $T\subset S$ be a
%closed sub-scheme of $S$ and $\varphi:G_1|_T\to G_2|_T$ be an
%$S$-group scheme isomorphism. Then there exists a finite \'{e}tale
%morphism $S^{\prime}\xra{\pi}S$ together with its section
%$\delta:T\to S^{\prime}$ over $T$ and an $S^{\prime}$-group scheme
%isomorphism
%$\Phi:\pi^*{G_1}\to\pi^*{G_2}$
%such that
%$\delta^*(\Phi)=\varphi$
%and the scheme $\tilde S$ is irreducible.
\end{prop}

\begin{proof}[Proof of Proposition \ref{PropEquatingGroups}]
{\bf Let $s\in S$ be a closed point and let $V$ be an $S$-scheme. In this
section we will write $V(s)$ for the scheme $s\times_S V$.
}
The proof of the proposition literally repeats the proof of
\cite[Prop.5.1]{PSV}
except exactly one reference,
which is the reference to
\cite[Lemma 7.2]{OP2}.
That reference one has to replace with the reference to the following
\begin{lem}
\label{OjPan}
Let $S=\text{Spec}(R)$ be a regular
semi-local scheme
{\bf such that
the residue field at any of its closed point is finite.
}
Let $T$ be a closed
subscheme of $S$. Let $\bar W$ be a closed subscheme of
$\Bbb P^d_S=\text{Proj}(S[X_0,\dots,X_d])$ and
$W=\bar W\cap \Aff^d_S$, where $\Bbb A^d_S$ is the affine space defined by
$X_0\neq0$. Let
$W_{\infty}=\bar W\setminus W$ be the intersection of $\bar W$ with the
hyperplane at infinity
$X_0=0$. Assume that over $T$ there exists a section
$\delta:T\to W$
of the canonical projection $W\to S$. Assume further that \\
\smallskip
{\rm{(1)}} $W$ is smooth and equidimensional over $S$,
of relative dimension $r$;\\
{\rm{(2)}} For every closed
point $s\in S$ the closed fibres of $W_\infty$ and $W$
satisfy
$$\text{dim}(W_{\infty}(s))<\text{dim}(W(s))=r\;.$$
\smallskip
Then there exists a closed subscheme $\tilde S$ of $W$ which is finite
\'etale over
$S$ and contains $\delta(T)$.
\end{lem}

\begin{proof}[Proof of Lemma \ref{OjPan}]
%Clearly, this lemma is a variation on the topic of the lemma
%\ref{OjPan_1}. Hence the proof given below is very similar to the proof
%of the lemma \ref{OjPan_1}. To illustate the difference and
To avoid technicalities we will give
the proof in the case of $r=1$ and left the general case to the reader.
If $r=1$, then for every closed
point $s\in S$ the closed fibres of $W_\infty$ and $W$
satisfy
$$\text{dim}(W_{\infty}(s))<\text{dim}(W(s))=1\;.$$

Since $S$ is semi-local, after a linear change
of coordinates we may assume that $\delta$ maps $T$ into
the closed subscheme of $\Pro^d_T$ defined by
$X_1=\dots=X_d=0$. For each closed fibre $\Pro^d(s)$ of $\Pro^d_S$
using
\cite[Thm.1.2]{Poo1},
we can choose a {\bf homogeneous} polynomial $H(s)$
%(in general of increasing degrees)
such that the subscheme $Y(s)$ of $\Pro^d(s)$ defined by the equation
$$H(s)=0$$
intersects $W(s)$ transversally
and avoids $W_{\infty}(s)$.

{\bf Let $s\in S$ be a closed point of $S$. Let $s\in T$ be a closed point of $T$.
Let $\Pro^d(s)$ be closed fibre  of $\Pro^d_S$ over the point $s$.
%Let $\mathcal O_s$ be the local ring of $\Pro^d_s$ at the point $[1:0:\dots :0]$.
Let $\mathbb A^d(s)\subset \Pro^d(s)$ be the affine subspace defined by the unequality
$X_0\neq 0$.
Let $t_i:=X_i/X_0$ ($i \in \{1,2, \dots, d\}$) be the coordinate function on $\mathbb A^d(s)$.
The origin $x_{0,s}=(0,0,\dots, 0)\in \mathbb A^d(s)$ has the homogeneous coordinates
$[1:0:\dots :0]$ in $\Pro^d(s)$.
Let $W(s)\subset \mathbb P^d(s)$ be the fibre of $W$ over the point $s$.
If $x_{0,s}$ is in $W(s)$, then
let $\tau (s)\subset \mathbb A^d(s)$ be the tangent space to $W(s)$ at the point
$x_0\in \mathbb A^d(s)$.
In this case let $l_s=l_s(t_1,t_2,\dots, t_d)$ be a linear form in $k(s)[t_1,t_2,\cdots,t_d]$
such that
$l_s|_{\tau (s)}\neq 0$.
If $x_{0,s}$ is not in $W(s)$, then set $l_s=t_1$.
In all the cases
$$L_s:=X_0\cdot l_s \in k(s)[X_1,X_2,\cdots,X_d]$$
is a homogeneous polinomial of degree $1$.

Let $s \in S$ be a closed point. Suppose $x_{0,s} \in W(s)$.
Then
by \cite[Thm. 1.2]{Poo1} there is an integer $N_1(s)\geq 1$ such that for any
positive integer $N\geq N_1(s)$ there is a homogeneous polinomial
%$H_1(s) \in k(s)[X_0,X_1,\cdots,X_n]$
$$H_{1,N}(s)=X^N_0\cdot F_{0,N}(s)+X^{N-1}_0\cdot F_{1,N}(s)+\dots+X_0\cdot F_{N-1,N}(s)+F_{N,N}(s)\in k(s)[X_0,X_1,\cdots,X_d]$$
of degree $N$ with homogeneous polinomials $F_{i,N}(s)\in k(s)[X_1,\dots,X_n]$ of degree $i$
such that the following holds \\
(i) $F_{0,N}(s)=0$, $F_{1,N}(s)=L_s$; \\
%(ii) the subscheme $V(s)$ of $\Pro^d_s$ defined by the equation $H_1(s)=0$ is $k(s)$-smooth;\\
(ii) the subscheme $V(s)\subseteq \Pro^d(s)$ defined by the equation $H_{1,N}(s)=0$
intersects $W(s)$ transversally;\\
(iii) $V(s)\cap W_{\infty}(s)=\emptyset$.

Let $s \in S$ be a closed point. Suppose $x_{0,s}$ is not in $W(s)$.
Then
by \cite[Thm. 1.2]{Poo1} there is an integer $N_1(s)\geq 1$ such that for any
positive integer $N\geq N_1(s)$ there is a homogeneous polinomial
%$H_1(s) \in k(s)[X_0,X_1,\cdots,X_n]$
$$H_{1,N}(s)=X^N_0\cdot F_{0,N}(s)+X^{N-1}_0\cdot F_{1,N}(s)+\dots+X_0\cdot F_{N-1,N}(s)+F_{N,N}(s)\in k(s)[X_0,X_1,\cdots,X_d]$$
of degree $N$ with homogeneous polinomials $F_{i,N}(s)\in k(s)[X_1,\dots,X_n]$ of degree $i$
such that the following holds \\
(i) $F_{0,N}(s)=0$, $F_{1,N}(s)=L_s=X_1$; \\
%(ii) the subscheme $V(s)$ of $\Pro^d_s$ defined by the equation $H_1(s)=0$ is $k(s)$-smooth;\\
(ii) the subscheme $V(s)\subseteq \Pro^d(s)$ defined by the equation $H_{1,N}(s)=0$
intersects $W(s)$ transversally;\\
(iii) $V(s)\cap W_{\infty}(s)=\emptyset$.

Let $N_1=\text{max}_s \{N_1(s)\}$, where $s$ runs over all the closed points of $S$.
For any closed point $s \in S$ set $H_1(s):= H_{1,N_1}(s)$.
Then for any closed point $s \in S$ the polinomial $H_1(s)\in k(s)[X_0,X_1,\cdots,X_d]$ is homogeneous of the degree $N_1$.
By the chinese
remainders' theorem for any $i=0,1,\dots,N$
there exists a common  lift
$F_{i,N_1}\in A[X_1,\dots,X_d]$ of all polynomials $F_{i,N_1}(s)$,
$s$ is a closed point of $S$,
such that
$F_{0,N_1}=0$ and for any $i=0,1,\dots,N$ the polinomial $F_{i,N_1}$ is homogeneous of
degree $i$.
By the chinese
remainders' theorem
there exists a common lift
$L \in A[X_1,\cdots,X_d]$
of all polynomials $L_s$,
$s$ is a closed point of $S$,
such that
$L$ is homogeneous of the degree one.
Set
$$H_{1,N_1}:=X^{N_1-1}_0\cdot L+X^{N_1-2}_0\cdot F_{2,N_1} \dots+X_0\cdot F_{N_1-1,N_1}+F_{N_1,N_1}\in A[X_0,X_1,\cdots,X_d].$$
Note that for any closed point $s$ in $S$ the evaluation of $H_{1,N_1}$ at the point $s$ coincides with the polinomial
$H_{1,N_1}(s)$ in $k(s)[X_0,X_1,\cdots,X_d]$.
Note also that
$H_{1,N_1}[1:0:\dots:0]=0$.
Hence $H_{1,N_1}|_{\delta(T)}\equiv 0$.

Note that for any closed point $s$ in $S$ the evaluation of $H_{1,N_1}$ at the point $s$ coincides with the polinomial
$H_{1,N_1}(s)$ in $k(s)[X_0,X_1,\cdots,X_d]$.
Since $S$ is semi-local and $W_{\infty}$ is projective quasi-finite over $S$
it is finite over $S$. Let $V\subset \mathbb P^d_S$ be the subscheme defined by
$\{H_{1,N_1}=0\}$.
Since for any closed point $s \in S$ one has
$V(s)\cap W_{\infty}(s)=\emptyset$, hence
$V\cap W_{\infty}=\emptyset$.
Note also that
$H_{1,N_1}[1:0:\dots:0]=0$.
Hence $H_{1,N_1}|_{\delta(T)}\equiv 0$.
}

%%%The proof of \ref{OjPan1} shows that if
%%%contains the point
%%%$[1:0:\dots :0]$
%%%and
%(see [Poonen, Th.1.2],
%XI, Th\'eor\`eme 2.1).
%%%  By the chinese
%%%remainders' theorem there exists a common  lift
%%%$H_i\in R[X_0,\dots,X_d]$ of all polynomials $H_i(s)$, $s\in
%%%\text{Max}(R)$. We
%%%may choose this common lift $H_i$ such that $H_i(1,0,\dots,0)=0$. Let $Y$ be
%%%the closed subscheme of $\Pro^d_S$ defined by
%%%$$H_1=0\,,\dots,H_r=0\;.$$

{\it We claim that the subscheme  $\tilde S=V\cap W$  has the
required properties
}.
Note first that $V\cap W$ is finite
over $S$.
In fact, $V\cap W=V\cap \bar W$, which
is projective over $S$ and such that every closed
fibre (hence every fibre) is finite. Since the closed
fibres of $V\cap W$ are finite \'etale over the closed points
of $S$, to show that $V\cap W$ is finite \'etale over $S$ it
only remains to show that it is flat over $S$. Noting that
$V\cap W \subset W$ is defined in every closed fibre by a length one regular
sequence of equations and localizing at each closed point of
$S$, we see that flatness follows from
\cite[Lemma 7.3]{OP2}. Whence the lemma.
\end{proof}
Return to the proof of the proposition \ref{PropEquatingGroups}.
Its proof literally repeats now the proof of
\cite[Prop.5.1]{PSV}. Apriory the regular scheme $\tilde S$
is not necessary irreducible. In that case replace $\tilde S$
with its irreducible component containing the scheme $\delta(T)$.
The proof of the proposition
\ref{PropEquatingGroups}
is completed.
%For each closed fibre $\Pro^d_s$ of $\Pro^d_S$
%using repeatedly
%\cite[Thm.1.2]{Poo1},
%we can choose a family of {\bf homogeneous} polynomials $H_1(s),\dots,H_r(s)$
%(in general of increasing degrees)
%such that the subscheme $Y(s)$ of $\Pro^d_S(s)$ defined by the equations
%$$H_1(s)=0\,,\,\dots\,,\, H_r(s)=0$$
%intersects $X(s)$ transversally and contains the point
%$[1:0:\dots :0]$. Then repeat the rest of the previous arguments.
\end{proof}

%\begin{prop}
%\label{PropEquatingGroups3}
%and simply connected
%(see \cite[Prop.5.1]{Pan1}).
%\end{prop}

\begin{prop}
\label{PropEquatingGroups4}
Theorem \ref{PropEquatingGroups2} holds in the case when the groups $G_1$ and $G_2$ are tori
and, more generally, in the case when the groups $G_1$ and $G_2$ are
of multiplicative type.
\end{prop}
We left a proof of this latter proposition to the reader.
The following proposition follows easily from
\cite[Exp. 9, Cor. 2.9.]{SGA3}.
\begin{prop}
\label{PropEquatingGroups5}
Let $T$ and $S$ be the same as in Theorem \ref{PropEquatingGroups2}.
Let $M_1$ and $M_2$ be two $S$-group schemes of multiplicative type.
Let $\alpha_1, \alpha_2: M_1 \rightrightarrows M_2$
be two $S$-group scheme morphisms such that
$\alpha_1|_T = \alpha_2|_T$.
Then $\alpha_1 = \alpha_2$.
\end{prop}

\begin{proof}[Proof of Theorem \ref{PropEquatingGroups2}]
%(b) The case of arbitrary reductive $S$-group schemes $G_1$ and $G_2$.
Let $Rad(G_r) \subset G_r$ be the radical of $G_r$ and let
$der(G_r) \subset G_r$ be the derived subgroup of $G_r$ ($r=1,2$)
(see \cite[Exp.XXII, 4.3]{D-G}).
By the very definition the radical is a torus. The $S$-group scheme
$der(G_r)$ is semi-simple ($r=1,2$).
Set $Z_r:= Rad(G_r) \cap der (G_r)$.
The above embeddings induce natural $S$-group morphisms
$$\Pi_r: Rad(G_r) \times_S der(G_r) \to G_r$$
with $Z_r$ as the kernel ($r=1,2$).
By \cite[Exp.XXII,Prop.6.2.4]{D-G} $\Pi_r$ is a central isogeny.
Particularly, $\Pi_r$ is a faithfully flat finite morphism by
\cite[Exp.XXII,Defn.4.2.9]{D-G}.
Let
$i_r: Z_r \hookrightarrow Rad(G_r) \times_S der(G_r)$
be the closed embedding.

The $T$-group scheme isomorphism
$\varphi: G_1|_T \to G_2|T$
induces certain $T$-group scheme isomorphisms
$\varphi_{der}: der(G_1|_T) \to der(G_2|_T)$,
$\varphi_{rad}: rad(G_1|_T) \to rad(G_2|_T)$
and
$\varphi_Z: Z_1|_T \to Z_2|_T$
{\bf such that}
$$(\Pi_2)|_T \circ (\varphi_{der}\times \varphi_{rad})=\varphi \circ (\Pi_1)|_T \ \text{and} \
i_{2,T} \circ \varphi_Z = (\varphi_{rad} \times \varphi_{der}) \circ i_{1,T}.$$

By Propositions
\ref{PropEquatingGroups}
and
\ref{PropEquatingGroups4}
there exist a finite \'{e}tale morphism
$\pi: \tilde S \to S$
(with an irreducible scheme $\tilde S$)
and its section
$\delta: T \to \tilde S$
over $T$ and $\tilde S$-group scheme isomorphisms
$$\Phi_{der}: der(G_{1, \tilde S}) \to der(G_{2,\tilde S}) \ \ \text{,} \ \
\Phi_{Rad}: Rad(G_{1, \tilde S}) \to Rad(G_{2,\tilde S}) \ \ \text{and} \ \
\Phi_Z: Z_{1, \tilde S} \to Z_{2,\tilde S}$$
such that
$\delta^*(\Phi_{der})= \varphi_{der}$,
$\delta^*(\Phi_{rad})= \varphi_{rad}$
and
$\delta^*(\Phi_Z)= \varphi_Z$.

Since $Z_r$ is contained in the center of $der(G_r)$ and is of multiplicative type
Proposition
\ref{PropEquatingGroups5}
yields the equality
$$i_{2, \tilde S} \circ \Phi_Z = (\Phi_{Rad} \times \Phi_{der}) \circ i_{1, \tilde S}:
Z_{1, \tilde S} \to Rad(G_{2, \tilde S}) \times_{\tilde S} der(G_{2, \tilde S}).$$
Thus $(\Phi_{Rad} \times \Phi_{der})$ induces an $\tilde S$-group scheme isomorphism
$$\Phi: G_{1, \tilde S} \to G_{2, \tilde S}$$
such that
$\Pi_{2, \tilde S} \circ (\Phi_{Rad} \times \Phi_{der})= \Phi \circ \Pi_{1, \tilde S}$.
The latter equality yields the following one
$(\Pi_2)|_T \circ \delta^*(\Phi_{Rad} \times \Phi_{der})= \delta^*(\Phi) \circ (\Pi_1)|_T$,
which in turn yields the equality
$$(\Pi_2)|_T \circ (\varphi_{rad} \times \varphi_{der})= \delta^*(\Phi) \circ (\Pi_1)|_T.$$
Comparing it with the equality
$(\Pi_2)|_T \circ (\varphi_{rad} \times \varphi_{der})= \varphi \circ (\Pi_1)|_T$
and using the fact that
$(\Pi_1)|_T$
is faithfully flat we conclude the equality
$\delta^*(\Phi)= \varphi$.
%It remains to show that $\delta^*(\Phi)= \varphi$. This is proved in the same fashion
%as in the proof of the equality
%$\delta^*(\Phi)= \varphi$
%in the case (a). The Theorem is proved.
%It remains to check that
%$\delta^*(\Phi)= \varphi$.

%To show that recall two equalities
%$$(\Pi_2)|_T \circ (\varphi_{rad} \times \varphi_{der})= \varphi \circ (\Pi_1)|_T \
%\text{and} \
%(\Pi_2)|_T \circ \delta^*(\Phi_{Rad} \times \Phi_{der})= \delta^*(\Phi) \circ (\Pi_1)|_T$$
%and recall the equality
%$\delta^*(\Phi_{Rad} \times \Phi_{der})=\varphi_{rad} \times \varphi_{der}$. Since $(\Pi_1)|_T$ is faithfully flat
%one has
%$\delta^*(\Phi)= \varphi$.
%The case (b) is proved.
\end{proof}

\section{Nice triples and group schemes}
\label{NiceTriplesAndGr_Schemes}
We need in an extension of the \cite[Thm. 3.9]{P}.
For that it is convenient to give a definition
under the following set up.
Let $U$ be as in Definition
\ref{DefnNiceTriple}.
Let $(\mathcal X,f,\Delta)$ be a special nice triple
over $U$ and let
$G_{\mathcal X}$ be a reductive
$\mathcal X$-group scheme and
$G_U:= \Delta^*(G_{\mathcal X})$
and
$G_{\text{const}}:=q^*_U(G_U)$.
Let
$\theta: (q^{\prime}: \mathcal X^{\prime} \to U, f^{\prime}, \Delta^{\prime}) \to (q: \mathcal X \to U, f, \Delta)$
be a morphism between nice triples over $U$.

\begin{defn}[Equating group schemes]{\rm
\label{def:q_constant}
%Let $U$ be as in Definition
%\ref{DefnNiceTriple}. Let $(\mathcal X,f,\Delta)$ be a special nice triple
%over $U$ and let
%$$\theta: (q^{\prime}: \mathcal X^{\prime} \to U, f^{\prime}, \Delta^{\prime}) \to (q: \mathcal X \to U, f, \Delta)$$
%be a morphism between nice triples over $U$.
%Let $G_{\mathcal X}$ be a reductive
%$\mathcal X$-group scheme and
%$G_U:= \Delta^*(G_{\mathcal X})$
%and
%$G_{\text{const}}:=q^*_U(\bG_U)$.
We say that
the morphism $\theta$ {\it equates}
the reductive
$\mathcal X$-group schemes
$G_{\mathcal X}$ and $G_{\text{const}}$, if
%is {\it $\theta$-constant} if
there is an
$\mathcal X^{\prime}$-group scheme isomorphism
$\Phi: \theta^*(G_{\text{const}}) \to \theta^*(G_{\mathcal X})$
with
$(\Delta^{\prime})^*(\Phi)= id_{G_U}$.
}
\end{defn}

\begin{rem}\label{rem:theta_circ_theta'}
Let $\rho: (\mathcal X'',f'',\Delta'') \to (\mathcal X',f',\Delta')$ and $\theta: (\mathcal X',f',\Delta') \to (\mathcal X,f,\Delta)$
be morphisms of nice triples over $U$.
If $\theta$ equates $G_{\mathcal X}$ and $G_{\text{const}}$,
%$\mu_{\text{const}}$ with $\mu_{\mathcal X}$,
then $\theta \circ \rho$ also equates
%%%$\mu_{\text{const}}$ and $\mu_{\mathcal X}$ also.
%and $\theta'$ equates $\theta^*(\mu_{\text{const}})$ with $\theta^*(\mu_{\mathcal X})$,
%then $(\theta \circ \theta')(\mu_{\mathcal X})$ is $(\theta \circ \theta')$-constant.
$G_{\mathcal X}$ and $G_{\text{const}}$.
\end{rem}

\begin{thm}
\label{equating3_triples}
Let $U$ be as in Definition
\ref{DefnNiceTriple}. Let $(\mathcal X,f,\Delta)$ be a special nice triple
over $U$.
Let $G_{\mathcal X}$ be a reductive
$\mathcal X$-group scheme and
$G_U:= \Delta^*(G_{\mathcal X})$
and
$G_{\text{const}}:=q^*_U(G_U)$.
%Let $C_{\mathcal X}$ be an
%$\mathcal X$-tori and
%$C_U:= \Delta^*(C_{\mathcal X})$
%and
%$C_{\text{const}}:=q^*_U(C_U)$.
%Let
%$\mu_{\mathcal X}: G_{\mathcal X} \to C_{\mathcal X}$
%be an $\mathcal X$-group scheme morphism smooth as a scheme morphism.
%Let
%$\mu_U = \Delta^*(\mu_{\mathcal X})$
%and
%$\mu_{\const}: G_{\const} \to C_{\const}$
%be the the pull-back of $\mu_U$ to
%$\mathcal X$.
Then
there exist a morphism
$\theta'': (q^{\prime\prime}: \mathcal X^{\prime\prime} \to U, f^{\prime\prime}, \Delta^{\prime\prime}) \to
(q: \mathcal X \to U, f, \Delta)$
between nice triples over $U$ such that
\begin{itemize}
\item[(i)]
the morphism $\theta''$ equates the reductive $\mathcal X$-group schemes
$G_{\const}$ and $G_{\mathcal X}$
%morphisms
%$\mu_{\const}$ and $\mu_{\mathcal X}$;
\item[(ii)]
the triple
$(\mathcal X^{\prime\prime},f^{\prime\prime},\Delta^{\prime\prime})$
is a special nice triple
over $U$ subjecting to the conditions
$(1^*)$ and $(2^*)$ from Definition
\ref{Conditions_1*and2*}.
\end{itemize}
%and such that
%$\mu_{\mathcal X}$
%is $\theta$-constant.
%there are
%$\mathcal X$-group schemes isomorphisms
%$$\Phi: \theta^*(G_{\text{const}}) \to \theta^*(G_{\mathcal X}) \ \ \text{and} \ \ \Psi: \theta^*(C_{\text{const}}) \to \theta^*(C_{\mathcal X})$$
%with
%$(\Delta^{\prime})^*(\Phi)= id_{G_U}$,
%$(\Delta^{\prime})^*(\Phi)= id_{G_U}$
%and
%\begin{equation}
%\label{PhiAndPsiRelation}
%\theta^*(\mu_{\mathcal X}) \circ \Phi = \Psi \circ \theta^*(\mu_{const}).
%\end{equation}
\end{thm}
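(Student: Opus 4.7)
\emph{Overview.} The plan is to construct $\theta''$ as a composite $\theta' \circ \rho$ of two morphisms of nice triples over $U$: a morphism $\theta' : (\mathcal X', f', \Delta') \to (\mathcal X, f, \Delta)$ that equates the reductive group schemes $G_{\const}$ and $G_{\mathcal X}$, followed by a morphism $\rho : (\mathcal X'', f'', \Delta'') \to (\mathcal X', f', \Delta')$ arranging conditions $(1^*)$ and $(2^*)$ on the target. Remark \ref{rem:theta_circ_theta'} will then guarantee that $\theta'' = \theta' \circ \rho$ still equates the two group schemes, while Remark \ref{NiceToSpecialNice} propagates the special property at each stage.

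\emph{Equating step.} Let $S$ be the spectrum of the semi-local ring of $\mathcal X$ at the closed points of $\Delta(U)$; since $\mathcal X$ is smooth over the regular irreducible $U$, the scheme $S$ is regular, semi-local and irreducible. Write $T \subset S$ for the image of $\Delta$, which is irreducible and hence connected. Both $G_{\const}|_S$ and $G_{\mathcal X}|_S$ restrict on $T$ to $G_U$, so in particular they are forms of each other. Applying Theorem \ref{PropEquatingGroups2} with $\varphi = \id_{G_U}$ produces a finite \'etale irreducible cover $\pi_S : \tilde S \to S$, a section $\delta_S$ of $\pi_S$ over $T$, and an $\tilde S$-group scheme isomorphism $\Phi_S : \pi_S^*G_{\const} \to \pi_S^*G_{\mathcal X}$ with $\delta_S^*(\Phi_S) = \id$. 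Spreading $\pi_S$ and $\Phi_S$ out to a Zariski neighborhood of the closed points of $\Delta(U)$ in $\mathcal X$, and then applying \cite[Constr.~4.2]{P} to recover a finite surjective morphism onto $\mathbb A^1 \times U$, one obtains the nice triple $(\mathcal X', f', \Delta')$ together with the desired equating morphism $\theta'$.

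\emph{Ensuring $(1^*)$ and $(2^*)$.} When $k$ is infinite, these conditions are vacuous and we take $\rho = \id$. When $k$ is finite, we must ensure that on each closed fibre $\mathcal Z''_u$ of $\{f''=0\}$ the only $k(u)$-rational point is $\Delta''(u)$, and that the number of closed points of each degree $d$ is bounded above by that on $\mathbb A^1_u$. Following the approach of \cite{P}, one uses the Poonen Bertini theorem over finite fields to select a function $h' \in k[\mathcal X']$ vanishing on $\Delta'(U)$, cutting out a divisor transverse to the fibres of $\{f'=0\}$, and---by adjusting coefficients modulo the maximal ideals of the closed points of $U$---controlling the $k(u)$-rational points on each closed fibre. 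A variant of \cite[Constr.~4.2]{P} then yields $\rho : (\mathcal X'', f'', \Delta'') \to (\mathcal X', f', \Delta')$ with $f'' = \rho^*(f') \cdot h''$ for a suitable $h''$ and with a new finite projection onto $\mathbb A^1 \times U$ turning $(\mathcal X'', f'', \Delta'')$ into a nice triple that meets $(1^*)$ and $(2^*)$.

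\emph{Main obstacle.} The delicate point is the finite-field case: the Poonen-type Bertini theorem must be invoked uniformly over all closed points of $U$, producing a single polynomial that is compatible with both the finite projection onto $\mathbb A^1 \times U$ demanded by the nice triple structure and the equating isomorphism already fixed in the previous step. Arranging that the resulting zero locus remains finite over $U$, preserves the section $\Delta'$, and realizes the point-count bounds of $(1^*)$ simultaneously in every fibre is the technical heart of the argument.
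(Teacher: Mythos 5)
Your overall strategy matches the paper's: decompose $\theta''$ as a composite of an equating morphism $\theta'$ (obtained via Theorem~\ref{PropEquatingGroups2}, a spreading-out, and \cite[Constr.~4.2]{P}) and a morphism $\rho$ securing $(1^*)$ and $(2^*)$ (via \cite[Thm.~3.9]{P}), then conclude with Remarks~\ref{NiceToSpecialNice} and \ref{rem:theta_circ_theta'}. However, there is a genuine gap in your equating step, at the choice of the semi-local scheme $S$.

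You take $S$ to be the semi-localization of $\mathcal X$ at the closed points of $\Delta(U)$ only. This is too small. The construction \cite[Constr.~4.2]{P} that the paper invokes first builds the larger data $(\mathcal Z,\mathcal Y,S,T)$: with $\mathcal Z = \{f=0\}$, the closed set $\mathcal Y = \Pi^{-1}(\Pi(\mathcal Z \cup \Delta(U)))$ is taken, and $S$ is the semi-localization at \emph{all} the closed points of $\mathcal Y$, while $T = \Delta(U) \subseteq S$. This enlargement is not cosmetic. After you spread the finite \'etale cover $\theta_0: S' \to S$ out to an open $\mathcal V$ and shrink to an open $\mathcal W \subseteq \mathcal V$ carrying a finite surjective morphism to $\Aff^1 \times U$ (via the analogue of \cite[Lemma~8.2]{OP2} inside \cite[Constr.~4.2]{P}), you need $\mathcal W$ to contain $\mathcal Y$, which is $\Pi$-saturated by construction, and in particular to contain all of $\{f=0\}$. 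With your smaller $S$, the spread-out neighborhood of $\Delta(U)$ need not contain the closed points of $\mathcal Z \setminus \Delta(U)$ (and a special nice triple only forces $\Delta(U)\subseteq\mathcal Z$ on closed points, not equality), nor the $\Pi$-preimages that the surjectivity of $\Pi^*$ requires. If $\{f=0\}$ is not entirely contained in $\mathcal W$, the ring $\Gamma(\mathcal X',\mathcal O_{\mathcal X'})/(f')$ is a proper localization of $\Gamma(\mathcal X,\mathcal O_{\mathcal X})/(f)$ and will in general fail to be a finite $\mathcal O$-module, so $(\mathcal X',f',\Delta')$ is not even a nice triple, let alone a special one.

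To repair this, replace your $S$ by $\Spec(\mathcal O_{\mathcal X,y_1,\dots,y_m})$ where $y_1,\dots,y_m$ are the closed points of $\mathcal Y = \Pi^{-1}(\Pi(\{f=0\}\cup \Delta(U)))$, exactly as the first part of \cite[Constr.~4.2]{P} prescribes, with $T=\Delta(U)$. Theorem~\ref{PropEquatingGroups2} applies verbatim to this $S$ (it is still regular, semi-local, and irreducible since $\mathcal X$ is $U$-smooth and irreducible), and the subsequent steps — the spreading-out of the cover and the isomorphism $\Phi_0$, the construction of $\mathcal W \supseteq \mathcal Y$ with its finite surjective map to $\Aff^1\times U$, and the passage to the triple $(\mathcal X',f',\Delta')$ with $f' = \theta^*(f)$ — then go through. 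A smaller but related imprecision: your description of the second step (choosing a function $h'$ by a Bertini argument) does not quite match the mechanism of \cite[Thm.~3.9]{P}, which produces a further finite \'etale cover with controlled closed fibres rather than a hypersurface cut inside $\mathcal X'$; since that theorem is cited as a black box this is less serious, but it is worth aligning the sketch with what the cited result actually delivers.
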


\begin{proof}[Proof of Theorem \ref{equating3_triples}]
%%%Clearly Theorem \ref{equating3_triples} yields Theorem \ref{ThEquatingGroups_1}.
%%%Thus, it sufficient to prove Theorem \ref{equating3_triples}.
%We can start by
%almost literally repeating arguments from the proof of \cite[Lemma
%8.1]{OP1}, which involve the following purely geometric lemma
%\cite[Lemma 8.2]{OP1}.
%\par
%For reader's convenience below we state that Lemma adapting
%notation to the ones of Section \ref{NiceTriples}.
%and
%\cite[Lemma 8.3]{OP}.
%One should replace notation as follows:
%the finite surjective map
%$q: \mathcal X \to \Aff^1 \times U$
%from
%\cite[Lemma 8.2]{OP}
%replace with our finite surjective map
%$\Pi: \mathcal X \to \Aff^1 \times U$
%from the condition of Theorem
%\ref{EquatingGroups}.
%The following lemma is proved in
%\cite[Lemma 8.2]{OP}
Let $U$ be as in the theorem. Let
$(\mathcal X,f,\Delta)$
be a special nice triple
over $U$ as in the theorem.
By the definition
of a nice triple there exists a finite surjective morphism
$\Pi:\mathcal X\to\Aff^1\times U$ of $U$-schemes.
The first part of the construction \cite[Constr. 4.2]{P}
gives us now the data
$(\mathcal Z,\mathcal Y, S, T)$,
where
$(\mathcal Z,\mathcal Y, T)$ are closed subsets of $\mathcal X$ finite over $U$.
If $\{y_1,\dots,y_n\}$ are all the closed points of $\mathcal Y$,
then $S=\text{Spec}(\mathcal O_{X, y_1,...,y_n})$.

Further, let $G_U=\Delta^*(G_{\mathcal
X})$ be as in the hypotheses of Theorem
\ref{equating3_triples} and
let
$G_{\const}$
be the pull-back of
$G_U$ to $\mathcal X$. Finally,
let
$\varphi:G_{\const}|_T \to G_{\mathcal X}|_T$
be the canonical
isomorphism. Recall that by assumption $\mathcal X$ is $U$-smooth and irreducible,
and thus $S$ is regular and irreducible.
By
Theorems
\ref{PropEquatingGroups2}
%and
%\ref{PropEquatingGroups_1_1_1}
there exists a finite
\'etale morphism $\theta_0:S^{\prime}\to S$, a section
$\delta:T\to S^{\prime}$ of $\theta_0$ over $T$ and an isomorphism
$\Phi_0:\theta^*_0(G_{\const,S})\to\theta^*_0(G_{\mathcal X}|_S)$
%and
%$\Psi_0:\theta^*_0(C_{\const,S})\to\theta^*_0(C_{\mathcal X}|_S)$
such that
%\begin{itemize}
%\item[(i)]
$\delta^*(\Phi_0)=\varphi$,
%\item[(ii)]
%%%$\delta^*(\Psi_0)=\psi$
and
%\item[(iii)]
%\begin{equation}
%\label{mu_X_const_0}
%\theta^*_0(\mu_{\mathcal X}|_S) \circ \Phi_0 = \Psi_0 \circ \theta^*_0(\mu_{\const,S}): \theta^*_0(G_{\const,S}) \to \theta^*_0(C_{\mathcal X}|_S)
%\end{equation}
%%\pi^*(G_1) \to \pi^*(C_2)$$
{\it where the scheme $S^{\prime}$ is irreducible.
}

Consider now the diagram (4) from the construction \cite[Constr.4.2]{P}.
\begin{equation}
\label{Diag_Manuel}
\xymatrix{
     {}  &  S^{\prime} \ar[d]^{\theta_0} \ar@{^{(}->}@<-2pt>[r]  & {\mathcal V}^{\prime}  \ar[d]_{\theta} &\\
     T \ar@{^{(}->}@<-2pt>[r] \ar[ur]^{
\delta } & S \ar@{^{(}->}@<-2pt>[r]  &   \mathcal V \ar@{^{(}->}@<-2pt>[r]  &  \mathcal X &\\
    }
\end{equation}
\noindent
Recall that here
$\theta: {\mathcal V}^{\prime}\to\mathcal V$
is finite \'etale (and the square is cartesian).
We may and will now suppose that
the neighborhood $\mathcal V$ of the points
$\{y_1,\dots,y_n\}$
from that diagram is chosen such that
there is $\mathcal V'$-group schemes isomorphism
$\Phi: \theta^*(G_{\const,\mathcal V})\to\theta^*(G_{\mathcal X}|_{\mathcal V})$
%and
%$\Psi: \theta^*(C_{\const,\mathcal V})\to\theta^*(C_{\mathcal X}|_{\mathcal V})$
with $\Phi|_{S'}=\Phi_0$.
%and $\Psi|_{S'}=\Psi_0$.
Clearly,
$\delta^*(\Phi)=\varphi$.
%and
%$\delta^*(\Psi)=\psi$.
%It is less clear, however it is still true that
%\begin{equation}
%\label{mu_X_const_1}
%\theta^*(\mu_{\mathcal X}|_{\mathcal V}) \circ \Phi = \Psi \circ \theta^*(\mu_{\const,\mathcal V}):
%\theta^*(G_{\const,\mathcal V}) \to \theta^*(C_{\mathcal X}|_\mathcal V)
%\end{equation}

Applying the second part of the construction \cite[Constr.4.2]{P}
and also the proposition
\cite[Prop. 4.3]{P}
to the finite
\'etale morphism $\theta: \mathcal V^{\prime}\to \mathcal V$ and to the section
$\delta:T\to \mathcal V^{\prime}$ of $\theta$ over $T$
we get \\
0)  firstly, an open subset $\mathcal W\subseteq\mathcal V$ containing $\mathcal Y$ (and hence containing $S$)
and endowed with a
finite surjective $U$-morphism $\Pi^*:\mathcal W\to\Aff^1\times
U$;\\
1) secondly, a triple $(\mathcal X^{\prime},f^{\prime},\Delta^{\prime})$;\\
2) thirdly, the \'{e}tale morphism of $U$-schemes $\theta: \mathcal X^{\prime} \to \mathcal X$;\\
3) forthly, inclusions of $U$-schemes $S\subset \mathcal W$ and $S' \subset \mathcal X'$.\\
Further we get \\
(i) the special nice triple $(q_U\circ \theta: \mathcal X^{\prime}\to U,f^{\prime},\Delta^{\prime})$
over $U$;\\
(ii) the morphism $\theta$ is a morphism
$(\mathcal X^{\prime},f^{\prime},\Delta^{\prime}) \to (\mathcal X,f,\Delta)$
between the nice triples, which {\it equates} the $\mathcal X$-group schemes
$G_{\const}$
and
$G_{\mathcal X}$;\\
(iii) the equality $f^{\prime}=\theta^*(f)$.

To complete the proof of the theorem just apply
the theorem \cite[Thm. 3.9]{P} to the
the special nice triple
$(\mathcal X^{\prime},f^{\prime},\Delta^{\prime})$
and use the remark \ref{rem:theta_circ_theta'}.
\end{proof}

\section{First application of the theory of nice triples}
\label{First Application}
%Theorem \ref{ElementaryNisSquare_1} is a purely geometric one.
\begin{thm}\label{equating3}
%\label{ElementaryNisSquare_1}
Let $X$ be an affine $k$-smooth irreducible $k$-variety, and let $x_1,x_2,\dots,x_n$ be closed points in $X$.
Let $U=Spec(\mathcal O_{X,\{x_1,x_2,\dots,x_n\}})$.
Let $\bG$ be a reductive
$X$-group scheme
and let
$\bG_U= can^*(\bG)$
be the pull-back of $\bG$ to $U$.
Given a non-zero function $\textrm{f}\in k[X]$ vanishing at each point $x_i$,
there is a diagram of the form
\begin{equation}
\label{DeformationDiagram0}
    \xymatrix{
\Aff^1 \times U\ar[drr]_{\pr_U}&&\mathcal X \ar[d]^{}
\ar[ll]_{\sigma}\ar[d]_{q_U}
\ar[rr]^{q_X}&&X &\\
&&U \ar[urr]_{\can}\ar@/_0.8pc/[u]_{\Delta} &\\
    }
\end{equation}
with an irreducible {\bf affine} scheme $\mathcal X$, a smooth morphism $q_U$, a finite surjective $U$-morphism $\sigma$ and an essentially smooth morphism $q_X$,
and a function $f^{\prime} \in q^*_X(\textrm{f} \ )k[\mathcal X]$,
%$f^{\prime}=\theta^{*}(f)\cdot h^{\prime}$ for an element
%$h^{\prime}\in\Gamma(\mathcal X^{\prime},\mathcal O_{\mathcal X^{\prime}})$,
which enjoys the following properties:
\begin{itemize}
\item[\rm{(a)}]
if
%$f=q^*_X(\textrm{f})$ and
$\mathcal Z^{\prime}$ is the closed subscheme of $\mathcal X$ defined by the principal ideal
$(f^{\prime})$, the morphism
$\sigma|_{\mathcal Z^{\prime}}: \mathcal Z^{\prime} \to \Aff^1\times U$
is a closed embedding and the morphism
$q_U|_{\mathcal Z^{\prime}}: \mathcal Z^{\prime} \to U$ is finite;
\item[\rm{(a')}] $q_U\circ \Delta=id_U$ and $q_X\circ \Delta=can$ and $\sigma\circ \Delta=i_0$ \\
(the first equality shows that $\Delta(U)$ is a closed subscheme in $\mathcal X$);
\item[\rm{(b)}] $\sigma$
is \'{e}tale in a neighborhood of
$\mathcal Z^{\prime}\cup \Delta(U)$;
\item[\rm{(c)}]
$\sigma^{-1}(\sigma(\mathcal Z^{\prime}))=\mathcal Z^{\prime}\coprod \mathcal Z^{\prime\prime}$
scheme theoretically
for some closed subscheme $\mathcal Z^{\prime\prime}$
\\ and
$\mathcal Z^{\prime\prime} \cap \Delta(U)=\emptyset$;
\item[\rm{(d)}]
$\mathcal D_0:=\sigma^{-1}(\{0\} \times U)=\Delta(U)\coprod \mathcal D^{\prime}_0$
scheme theoretically
for some closed subscheme $\mathcal D^{\prime}_0$
and $\mathcal D^{\prime}_0 \cap \mathcal Z^{\prime}=\emptyset$;
\item[\rm{(e)}]
for $\mathcal D_1:=\sigma^{-1}(\{1\} \times U)$ one has
%$D^{\prime}_1 \cap \mathcal Z^{\prime}=\emptyset$.
$\mathcal D_1 \cap \mathcal Z^{\prime}=\emptyset$.
\item[\rm{(f)}]
there is a monic polinomial
$h \in \mathcal O[t]$
such that
%$(h)=Ker[\mathcal O[t] \xrightarrow{{\bar \sigma}^*}\Gamma(\mathcal X, \mathcal O_{\mathcal X})/(f^{\prime})]$, \\
$(h)=Ker[\mathcal O[t] \xrightarrow{\sigma^*} k[\mathcal X] \xrightarrow{-} k[\mathcal X]/(f^{\prime})]$, \\
where $\mathcal O:=k[U]$ and the map bar takes any $g\in k[\mathcal X]$ to ${\bar g}\in k[\mathcal X]/(f^{\prime})$;\\
\item[\rm{(g)}] there is an $\mathcal X$-group scheme isomorphism
$\Phi:  p^*_U(\bG_U)\to p^*_X(\bG)$
with
$\Delta^*(\Phi)= id_{\bG_U}$.
\end{itemize}
\end{thm}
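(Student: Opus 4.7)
The plan is to chain the two main technical inputs already accumulated in the paper. First, Proposition~\ref{BasicTripleProp_1} applied to $X$, the closed points $x_1,\ldots,x_n$, and the function $\mathrm{f}$ produces (after shrinking $X$) a special nice triple $(q^{0}_U\colon \mathcal X^0\to U,\Delta^0,f^0)$ equipped with an essentially smooth morphism $q_X^0\colon\mathcal X^0\to X$ satisfying $q_X^0\circ\Delta^0=\mathrm{can}$ and $f^0=(q_X^0)^*(\mathrm{f})$. Pulling back $\bG$ gives a reductive $\mathcal X^0$-group scheme $G_{\mathcal X^0}:=(q_X^0)^*(\bG)$ whose $\Delta^0$-restriction is $\bG_U$.

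Now feed $(\mathcal X^0,f^0,\Delta^0)$ and $G_{\mathcal X^0}$ into Theorem~\ref{equating3_triples}. The output is a morphism of nice triples $\theta\colon(\mathcal X,f',\Delta)\to(\mathcal X^0,f^0,\Delta^0)$ with two key features: (i) $\theta$ equates $q_U^{0,*}(\bG_U)$ and $G_{\mathcal X^0}$, yielding after composition an $\mathcal X$-group scheme isomorphism $\Phi\colon q_U^*(\bG_U)\to q_X^*(\bG)$ with $\Delta^*(\Phi)=\mathrm{id}_{\bG_U}$, where $q_U:=q^{0}_U\circ\theta$ and $q_X:=q_X^0\circ\theta$; (ii) the triple $(\mathcal X,f',\Delta)$ is special nice and satisfies the counting conditions $(1^*)$ and $(2^*)$ of Definition~\ref{Conditions_1*and2*}. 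Feature (i) is exactly item (g). By Definition~\ref{DefnMorphismNiceTriple}(2) one has $f'=\theta^*(f^0)\cdot h_*$ for some $h_*\in k[\mathcal X]$; combined with $f^0=(q_X^0)^*(\mathrm{f})$ this places $f'$ in $q_X^*(\mathrm{f})\,k[\mathcal X]$. The first two identities of (a') come from $q^{0}_U\circ\Delta^0=\mathrm{id}_U$, $q_X^0\circ\Delta^0=\mathrm{can}$, and $\theta\circ\Delta=\Delta^0$.

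It remains to choose the finite surjective $U$-morphism $\sigma\colon\mathcal X\to\Aff^1\times U$ of Definition~\ref{DefnNiceTriple}(c) so as to guarantee (a)--(f) together with the remaining identity $\sigma\circ\Delta=i_0$ of (a'). That last identity is free: any $U$-morphism $\sigma\circ\Delta\colon U\to\Aff^1\times U$ has the form $(g,\mathrm{id})$ with $g\in\mathcal O$, and composing $\sigma$ with the translation $t\mapsto t-g$ on $\Aff^1\times U$ gives another finite surjective $U$-morphism that kills $g$. The vanishing locus $\mathcal Z':=\{f'=0\}$ is finite over $U$ by Definition~\ref{DefnNiceTriple}(b); condition $(2^*)$ forces $\Delta(u)$ to be the unique $k(u)$-rational point of $\mathcal Z'_u$ above each closed $u\in U$, while $(1^*)$ bounds the number of closed points of $\mathcal Z'_u$ of every residue degree by the corresponding count in $\Aff^1_u$. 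These are precisely the inputs needed to carry out the Voevodsky-style change of affine coordinate on $\Aff^1$ over $U$ as in \cite[Thm.~3.9]{P}: one obtains $\sigma$ for which $\sigma|_{\mathcal Z'}$ is a closed embedding, $\sigma$ is étale in a Zariski neighborhood of $\mathcal Z'\cup\Delta(U)$, and the images of $\Delta(U)$, of $\mathcal D_0\setminus\Delta(U)$, and of the points of $\mathcal Z'$ over $t=1$ are pairwise separated. Items (a), (b), (c), (d), (e) then follow from étaleness combined with the decompositions of the scheme-theoretic inverse images, and (f) is automatic, since $\sigma(\mathcal Z')\subset\Aff^1\times U$ is closed and finite flat over $U$ and so is cut out by a unique monic polynomial $h\in\mathcal O[t]$.

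The genuine obstacle is this last step: verifying that the counting hypotheses $(1^*)$ and $(2^*)$ really do suffice to produce a $\sigma$ meeting (a)--(e) when the residue fields of $U$ are finite. Over an infinite $k$ a generic linear shift on $\Aff^1$ solves the problem instantly; in the finite-field case one must invoke the point-counting inequalities of $(1^*)$ together with the Bertini-type theorems of Poonen, in direct analogy with the argument of \cite[Thm.~3.9]{P}. Once such a $\sigma$ is fixed, the entire theorem is a matter of unwinding the nice-triple definitions and transporting the isomorphism $\Phi$ supplied by feature (i).
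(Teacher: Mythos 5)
Your proposal follows exactly the paper's own chain of reasoning: shrink $X$ and invoke Proposition~\ref{BasicTripleProp_1} to build a special nice triple together with $q_X$; apply Theorem~\ref{equating3_triples} to the triple and to $G_{\mathcal X}:=q_X^*(\bG)$ to obtain a morphism of nice triples $\theta$ that simultaneously equates the group schemes (yielding item (g)) and ensures the new triple satisfies conditions $(1^*)$, $(2^*)$; finally produce the finite surjective $U$-morphism $\sigma$. Your derivation that $f'\in q_X^*(\mathrm{f})\,k[\mathcal X]$ from Definition~\ref{DefnMorphismNiceTriple}(2), and the two identities $q_U\circ\Delta=\mathrm{id}_U$, $q_X\circ\Delta=\mathrm{can}$, agree with the paper.

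The one place where you diverge, and where a reader would object, is the final step. The paper cites the external result \cite[Thm.~3.8]{P} to produce a $\sigma$ satisfying items (a)--(f) in one stroke, including the normalization $\sigma\circ\Delta=i_0$. You instead attribute this construction to \cite[Thm.~3.9]{P} and then sketch a Poonen--Bertini-style argument as the intended proof of that result. That citation is off: in the present paper \cite[Thm.~3.9]{P} is the tool consumed \emph{inside} the proof of Theorem~\ref{equating3_triples} (to produce the triple satisfying $(1^*)$, $(2^*)$); the theorem that converts the counting conditions $(1^*)$, $(2^*)$ into the good finite surjective $\sigma$ is \cite[Thm.~3.8]{P}. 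Your translation trick for arranging $\sigma\circ\Delta=i_0$ is sound but superfluous once one quotes the right theorem, since \cite[Thm.~3.8]{P} already delivers that identity as part of (a'). You candidly flag the final construction as ``the genuine obstacle'' that you have not fully carried out; with the correct citation substituted, the proposal becomes the paper's proof and that obstacle disappears.
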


%\begin{thm}
%\label{equating3}
%Let $X$, $\{x_1,x_2,\dots,x_n\}\subset X$,
%$U=Spec(\mathcal O_{X,\{x_1,x_2,\dots,x_n\}})$,
%and $\textrm{f}\in k[X]$
%be as in Theorem
%\ref{ElementaryNisSquare_1}.
%Let $\bG$ be a reductive
%$X$-group scheme
%and let
%$\bG_U= can^*(\bG)$
%be the pull-back of $\bG$ to $U$.
%%Let $\mathbf C$ be an
%%$X$-torus and
%%$\mathbf C_U:= can^*(\mathbf C)$.
%%Let
%%$\mu: \bG \to \mathbf C$
%%be an
%%$X$-group scheme morphism smooth as a scheme morphism.
%%Let
%%$\mu_U = can^*(\mu): \bG_U \to \mathbf C_U$.
%Then there is a diagram of the form
%\begin{equation}
%\label{DeformationDiagram01}
%    \xymatrix{
%\Aff^1 \times U\ar[drr]_{\pr_U}&&\mathcal X \ar[d]^{}
%\ar[ll]_{\sigma}\ar[d]_{q_U}
%\ar[rr]^{q_X}&&X &\\
%&&U \ar[urr]_{\can}\ar@/_0.8pc/[u]_{\Delta} &\\
%    }
%\end{equation}
%with an irreducible {\bf affine} scheme $\mathcal X$, a smooth morphism $q_U$, a finite surjective $U$-morphism $\sigma$ and an essentially smooth morphism $q_X$,
%and a function $f^{\prime} \in q^*_X(\textrm{f} \ )k[\mathcal X]$,
%which enjoys the properties (a) to (f) from Theorem
%\ref{ElementaryNisSquare_1}.
%Moreover,
%there is a $\mathcal X$-group scheme isomorphism
%$\Phi:  p^*_U(\bG_U)\to p^*_X(\bG)$
%%$\ \ \text{and} \ \ \Psi:  p^*_U(\mathbf C_U)\to p^*_X(\mathbf C)$
%such that $\Delta^*(\Phi)= id_{\bG_U}$
%%$\Delta^*(\Psi)= id_{\mathbf C_U}$
%%and
%%$p^*_X(\mu) \circ \Phi = \Psi \circ p^*_U(\mu_U)$.
%\end{thm}

\begin{proof}[Proof of Theorem \ref{equating3}]
By Proposition \ref{BasicTripleProp_1} one can shrink $X$ such that
$x_1,x_2, \dots , x_n$ are still in $X$ and $X$ is affine, and then to construct a special nice triple
$(q_U: \mathcal X \to U, \Delta, f)$ over $U$ and an essentially smooth morphism $q_X: \mathcal X \to X$ such that
$q_X \circ \Delta= can$, $f=q^*_X(\text{f})$ and the set of closed points of $\Delta(U)$ is
contained in the set of closed points of $\{f=0\}$.

Set $\bG_{\mathcal X}=q^*_X(\bG)$, then $\Delta^*(\bG_{\mathcal X})=can^*(\bG)$. Thus the $U$-group scheme
$\bG_U$ from Theorem \ref{equating3_triples} and the $U$-group scheme
$\bG_U$ from Theorem \ref{equating3} are the same. By Theorem
\ref{equating3_triples}
there exists a morphism
$\theta:(\mathcal X_{new},f_{new},\Delta_{new})\to(\mathcal X,f,\Delta)$
such that the triple
$(\mathcal X_{new},f_{new},\Delta_{new})$
%$(\mathcal X^{\prime},f^{\prime},\Delta^{\prime})$
is a special nice triple
over $U$
subject to the conditions
$(1^*)$ and $(2^*)$ from Definition
\ref{Conditions_1*and2*}.
And, additionally,
there is an isomorphism
$$\Phi: (q_U\circ \theta)^*(\bG_U)=\theta^*(G_{\text{const}})
\to \theta^*(G_{\mathcal X})=(q_X \circ \theta)^*(\bG) \ \text{with} \ (\Delta_{new})^*(\Phi)= id_{G_U}$$
%and
%$$\Psi: (q_U\circ \theta)^*(\mathbf C_U)=\theta^*(C_{\text{const}}) \to \theta^*(C_{\mathcal X})=(q_X \circ \theta)^*(\mathbf G)$$
%of
%$\mathcal X_{new}$-group schemes such that
%\begin{itemize}
%\item[\rm{(1)}]
%$(\Delta^{\prime})^*(\Phi)= id_{G_U}$,
%$(\Delta^{\prime})^*(\Phi)= id_{G_U}$
%and
%\begin{equation}
%\label{PhiAndPsiRelation}
%\theta^*(\mu_{\mathcal X}) \circ \Phi = \Psi \circ \theta^*(\mu_{const}).
%\end{equation}

The triple
$(\mathcal X_{new},f_{new},\Delta_{new})$
%$(\mathcal X^{\prime},f^{\prime},\Delta^{\prime})$
is a special nice triple
{\bf over} $U$
subject to the conditions
$(1^*)$ and $(2^*)$ from Definition
\ref{Conditions_1*and2*}.
%the condition $(2)$ to $(4)$
%from Theorem \ref{ThEquatingGroups_1}
%are full filled.
Thus by \cite[Thm. 3.8]{P}
there is a finite surjective morphism
$\Aff^1\times U \xleftarrow{\sigma_{new}} \mathcal X_{new}$
of the $U$-schemes satisfying the conditions
$(a)$ to $(\textrm{f})$
from Theorem \ref{equating3}. Hence one has a diagram of the form
\begin{equation}
\label{DeformationDiagram0_1}
    \xymatrix{
\Aff^1 \times U\ar[drr]_{\pr_U}&&\mathcal X_{new} \ar[d]^{}
\ar[ll]_{\sigma_{new}}\ar[d]_{q_U\circ \theta}
\ar[rr]^{q_X\circ \theta}&&X &\\
&&U \ar[urr]_{\can}\ar@/_0.8pc/[u]_{\Delta^{\prime}} &\\
    }
\end{equation}
with the irreducible scheme $\mathcal X_{new}$, the smooth morphism $q_{U,new}:=q_U\circ \theta$,
the finite surjective morphism $\sigma_{new}$ and the essentially smooth morphism $q_{X,new}:=q_X\circ \theta$
and with the function
$f_{new} \in (q_{X,new})^*(\textrm{f})k[\mathcal X_{new}]$,
which after identifying notation enjoy the properties
(a) to (\textrm{f}) from Theorem \ref{equating3}.
%Moreover the isomorphism $\Phi$ of reductive $\mathcal X^{\prime}$-group schemes
%is such that $(\Delta^{\prime})^*(\Phi)=\id_{G_U}$.
The isomorphism $\Phi$ is
%and $\Psi$ are
the desired ones.
Whence the Theorem \ref{equating3}.
\end{proof}

We keep notation of the theorems \ref{equating3}.
%and \ref{ElementaryNisSquare_1}.
To formulate a consequence of the theorem \ref{equating3} (see Corollary \ref{ElementaryNisSquareNew_12} below),
note that using the items (b) and (c) of Theorem
\ref{equating3}
one can find an element
$g \in I(\mathcal Z^{\prime\prime})$
such that \\
(1) $(f^{\prime})+(g)=\Gamma(\mathcal X, \mathcal O_{\mathcal X})$, \\
(2) $Ker(\Delta^*)+(g)=\Gamma(\mathcal X_{new}, \mathcal O_{\mathcal X_{new}})$, \\
(3) $\sigma_{g}=\sigma|_{\mathcal X_g}: \mathcal X_g \to \Aff^1_U$ is \'{e}tale.\\

\begin{cor}[Corollary of Theorem \ref{equating3}]
\label{ElementaryNisSquareNew_12}
The function $f^{\prime}$ from Theorem \ref{equating3}, the polinomial $h$ from the item $(\textrm{f} \ )$
of that Theorem, the morphism $\sigma: \mathcal X \to \Aff^1_U$
and the function
$g \in \Gamma(\mathcal X,\mathcal O_{\mathcal X} )$
defined just above
enjoy the following properties:
\begin{itemize}
\item[\rm{(i)}]
the morphism
$\sigma_g= \sigma|_{\mathcal X_g}: \mathcal X_g \to \Aff^1\times U $
is \'{e}tale,
\item[\rm{(ii)}]
data
$ (\mathcal O[t],\sigma^*_g: \mathcal O[t] \to \Gamma(\mathcal X,\mathcal O_{\mathcal X})_g, h ) $
satisfies the hypotheses of
\cite[Prop.2.6]{C-TO},
i.e.
$\Gamma(\mathcal X,\mathcal O_{\mathcal X} )_g$
is a finitely generated
%as the
$\mathcal O[t]$-algebra, the element $(\sigma_g)^*(h)$
is not a zero-divisor in
$\Gamma(\mathcal X,\mathcal O_{\mathcal X} )_g$
and
$\mathcal O[t]/(h)=\Gamma(\mathcal X,\mathcal O_{\mathcal X})_g/h\Gamma(\mathcal X,\mathcal O_{\mathcal X})_g$ \ ,
\item[\rm{(iii)}]
$(\Delta(U) \cup \mathcal Z') \subset \mathcal X_g$ \ and $\sigma_g \circ \Delta=i_0: U\to \Aff^1\times U$,
\item[\rm{(iv)}]
$\mathcal X_{gh} \subseteq \mathcal X_{gf^{\prime}}\subseteq \mathcal X_{f^{\prime}}\subseteq \mathcal X_{q^*_X(\textrm{f})}$ \ ,
\item[\rm{(v)}]
$\mathcal O[t]/(h)=\Gamma(\mathcal X,\mathcal O_{\mathcal X})/(f^{\prime})$
and
$h\Gamma(\mathcal X,\mathcal O_{\mathcal X})=(f^{\prime})\cap I(\mathcal Z^{\prime\prime})$
and
$(f^{\prime}) +I(\mathcal Z^{\prime\prime})=\Gamma(\mathcal X,\mathcal O_{\mathcal X})$.
%$\Gamma(\mathcal X,\mathcal O_{\mathcal X})/(h)= \Gamma(\mathcal X,\mathcal O_{\mathcal X})/(f^{\prime}) \
%\times \Gamma(\mathcal X,\mathcal O_{\mathcal X})/I(\mathcal Z^{\prime\prime})$.
%\item[\rm{(e)}]
%$\sigma^{-1}(\{1\}\times U) \cap \mathcal Z^{\prime}=\emptyset$.
\end{itemize}
\end{cor}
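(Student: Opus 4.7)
The plan is to verify the five items (i)--(v) by reading them off from Theorem \ref{equating3} together with the defining conditions (1)--(3) of $g$, and then to deduce item (ii) from (iv), (v), and those coprimality conditions. I expect the only real content to sit in parts (v) and (ii); the rest is essentially bookkeeping.

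First, I would dispose of the cheap items. Item (i) is literally condition (3) in the construction of $g$. For (iii), the two relations $(f')+(g)=\Gamma(\mathcal X,\mathcal O_{\mathcal X})$ and $\Ker(\Delta^*)+(g)=\Gamma(\mathcal X,\mathcal O_{\mathcal X})$ say scheme-theoretically that $\mathcal Z'\cap V(g)=\emptyset$ and $\Delta(U)\cap V(g)=\emptyset$, which is exactly the inclusion $(\Delta(U)\cup\mathcal Z')\subset\mathcal X_g$; the identity $\sigma_g\circ\Delta=i_0$ is just the restriction to $\mathcal X_g$ of (a') in Theorem \ref{equating3}. Item (iv) follows from (f) of that theorem, which gives $\sigma^*(h)\in(f')$ and thereby forces $\mathcal X_{gh}\subseteq\mathcal X_{gf'}$; the remaining two inclusions are trivial, the last one using $f'\in q_X^*(\textrm{f})\cdot k[\mathcal X]$ from the statement of Theorem \ref{equating3}.

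The substantive step is (v). First I would observe that (a) of Theorem \ref{equating3} makes $\sigma|_{\mathcal Z'}$ a closed embedding, so the composite $\mathcal O[t]\to k[\mathcal X]\to k[\mathcal X]/(f')$ is surjective; coupled with (f), which identifies its kernel as $(h)$, this yields $\mathcal O[t]/(h)=k[\mathcal X]/(f')$. Next, the scheme-theoretic decomposition $\sigma^{-1}(\sigma(\mathcal Z'))=\mathcal Z'\coprod\mathcal Z''$ of (c), combined with the fact just established that $\sigma(\mathcal Z')\subset\Aff^1\times U$ is cut out by $h$, translates at the ideal level of $k[\mathcal X]$ into the two remaining equalities $h\cdot k[\mathcal X]=(f')\cap I(\mathcal Z'')$ and $(f')+I(\mathcal Z'')=k[\mathcal X]$.

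Finally, for (ii): $k[\mathcal X]$ is a finite $\mathcal O[t]$-module (since $\sigma$ is finite surjective by the nice triple axioms), so $k[\mathcal X]_g$ is a finitely generated $\mathcal O[t]$-algebra; and $\sigma_g^*(h)$ is a non-zerodivisor because $\mathcal X$ is irreducible and $U$-smooth (hence integral) while $h$ is monic, so $\sigma^*(h)$ is a nonzero element of an integral domain. The hard part, and the main obstacle I anticipate, is the identification $\mathcal O[t]/(h)=k[\mathcal X]_g/h\cdot k[\mathcal X]_g$: I would localize the second equality of (v) at $g$, use $g\in I(\mathcal Z'')$ to collapse $I(\mathcal Z'')_g=(1)$, conclude $h\cdot k[\mathcal X]_g=(f')\cdot k[\mathcal X]_g$, and then use condition (1) (which makes $\bar g$ a unit in $k[\mathcal X]/(f')$) to see that the further localization at $\bar g$ is vacuous, so $k[\mathcal X]_g/h\cdot k[\mathcal X]_g=k[\mathcal X]/(f')=\mathcal O[t]/(h)$. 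The risk is not conceptual but notational: one must keep the three coprimality relations and the scheme-theoretic disjoint sum straight, and recognize at the end that (1) is precisely what lets the localization at $g$ commute with the quotient by $(f')$.
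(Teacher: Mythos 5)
Your proof is correct, and it is almost certainly the same argument that the paper has in mind: the paper's own ``proof'' of this corollary is the single sentence ``Just repeat literally the proof of [P, Cor.\ 7.2],'' so the detailed verification you supply---reading (i), (iii), (iv) off from the defining conditions (1)--(3) of $g$ and properties (a'), (f) of Theorem~\ref{equating3}; deducing the three equalities in (v) from (a), (c), (f) via the scheme-theoretic identification of $\sigma(\mathcal Z')$ with $V(h)$; and then obtaining (ii) by localizing the ideal-theoretic identity $h\,k[\mathcal X]=(f')\cap I(\mathcal Z'')$ at $g$, collapsing $I(\mathcal Z'')_g$ using $g\in I(\mathcal Z'')$, and invoking coprimality (1) to see that further localization at $\bar g$ is a no-op in $k[\mathcal X]/(f')$---is precisely the chain of reductions that the referenced corollary carries out from the same list of hypotheses. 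One minor point worth making explicit in your write-up of (ii): the injectivity of $\sigma^*$ (needed to know $\sigma^*(h)\neq 0$) comes from $\sigma$ being a finite surjective, hence dominant, morphism between the integral schemes $\mathcal X$ and $\Aff^1\times U$; and you should observe that $g\neq 0$ (so that $k[\mathcal X]_g\neq 0$), which is guaranteed by $\Delta(U)\subset\mathcal X_g$. These are small gaps of exposition, not of substance.
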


\begin{proof}
Just repeat literally the proof of \cite[Cor. 7.2]{P}.
\end{proof}

\begin{rem}
\label{ElementaryNisSquareRem_12}
The item \rm{(ii)} of this corollary shows that the cartesian square
\begin{equation}
\label{SquareDiagram2_1}
    \xymatrix{
     \mathcal X_{gh}  \ar[rr]^{\inc} \ar[d]_{\sigma_{gh}} &&  \mathcal X_g \ar[d]^{\sigma_g}  &\\
     (\Aff^1 \times U)_{h} \ar[rr]^{\inc} && \Aff^1 \times U &\\
    }
\end{equation}
can be used to glue principal $\bG$-bundles for a reductive $U$-group scheme $\bG$.
\end{rem}

Set $Y:=\mathcal X_g$, $p_X=q_X: Y\to X$, $p_U=q_U: Y\to U$, $\tau=\sigma_g$, $\tau_h=\sigma_{gh}$, $\delta=\Delta:U \to Y$ and note that
$pr_U\circ \tau=p_U$. Take the monic polinomial
$h \in \mathcal O[t]$ from the item (f) of Theorem \ref{equating3}.
With this replacement of notation and with the element $h$ we arrive to the following

\begin{thm}\label{Major2}
Let the field $k$, the variety $X$, its closed points $x_1,x_2,\dots,x_n$, the semi-local ring $\mathcal O=\mathcal O_{X,\{x_1,x_2,\dots,x_n\}}$,
the semi-local scheme $U=Spec(\mathcal O)$,
the function $\textrm{f}\in k[X]$
be the same as in Theorem \ref{equating3}.
Let $\bG$ be a reductive
$X$-group scheme
and let
$\bG_U= can^*(\bG)$
be the pull-back of $\bG$ to $U$.
Then one has a well-defined commutative diagram of affine schemes with the irreducible affine $U$-smooth $Y$,
a section $\delta: U\to Y$ of the structure morphism $p_U: Y\to U$,
and the monic polinomial $h\in O[t]$
\begin{equation}
\label{SquareDiagram2_2_2}
    \xymatrix{
       (\Aff^1 \times U)_{h}  \ar[d]_{inc} && Y_h:=Y_{\tau^*(h)} \ar[ll]_{\tau_{h}}  \ar[d]^{inc} \ar[rr]^{(p_X)|_{Y_h}} && X_{\text{f}}  \ar[d]_{inc}   &\\
     (\Aff^1 \times U)  && Y  \ar[ll]_{\tau} \ar[rr]^{p_X} && X                                     &\\
    }
\end{equation}
subject to the following conditions:
\begin{itemize}
\item[\rm{(i)}]
the left hand side square
is an elementary {\bf distinguished} square in the category of affine $U$-smooth schemes in the sense of
\cite[Defn.3.1.3]{MV};
\item[\rm{(ii)}]
$p_X\circ \delta=can: U \to X$, where $can$ is the canonical morphism,
\item[\rm{(iii)}]
$\tau\circ \delta=i_0: U\to \Aff^1 \times U$ is the zero section
of the projection $pr_U: \Aff^1 \times U \to U$;
\item[\rm{(iv)}] $h(1)\in \mathcal O[t]$ is a unit;
\item[\rm{(v)}]
there is a $Y$-group scheme isomorphism
$\Phi:  p^*_U(\bG_U)\to p^*_X(\bG)$
%$ \ \ \text{and} \ \ \Psi:  p^*_U(\mathbf C_U)\to p^*_X(\mathbf C)$
with $\delta^*(\Phi)= id_{\bG_U}$.
\end{itemize}
\end{thm}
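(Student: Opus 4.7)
The plan is to assemble Theorem \ref{Major2} directly from the machinery already built in the paper, namely Theorem \ref{equating3} together with Corollary \ref{ElementaryNisSquareNew_12} and the accompanying Remark \ref{ElementaryNisSquareRem_12}. First I would apply Theorem \ref{equating3} to the given data $(X,\{x_1,\dots,x_n\},\mathrm{f},\bG)$ to obtain the diagram \eqref{DeformationDiagram0}, the function $f'\in q_X^*(\mathrm{f})\,k[\mathcal X]$, the monic polynomial $h\in\mathcal O[t]$ of item (f), and the group scheme isomorphism $\Phi\colon p_U^*(\bG_U)\to p_X^*(\bG)$ with $\delta^*(\Phi)=\mathrm{id}_{\bG_U}$ of item (g). Then I would invoke Corollary \ref{ElementaryNisSquareNew_12} to extract the function $g\in\Gamma(\mathcal X,\mathcal O_\mathcal X)$ vanishing on $\mathcal Z''$ for which $\sigma_g=\sigma|_{\mathcal X_g}$ is \'etale, and check (via Remark \ref{ElementaryNisSquareRem_12}) that the square \eqref{SquareDiagram2_1} is a distinguished square in the sense of \cite[Defn.3.1.3]{MV}.

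After the renaming indicated just before the theorem statement ($Y:=\mathcal X_g$, $\tau:=\sigma_g$, $\tau_h:=\sigma_{gh}$, $p_U:=q_U|_Y$, $p_X:=q_X|_Y$, $\delta:=\Delta$), I would restrict the isomorphism $\Phi$ to $Y$ to obtain the isomorphism claimed in (v). The conditions (ii) and (iii) are then direct translations: (ii) follows from the equality $q_X\circ\Delta=\mathrm{can}$ in item (a') of Theorem \ref{equating3}, and (iii) follows from item (iii) of Corollary \ref{ElementaryNisSquareNew_12}, which says $\sigma_g\circ\Delta=i_0$. The scheme $Y$ is affine, $U$-smooth and irreducible because $\mathcal X$ is, and $q_U$ is smooth. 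Condition (i) is exactly the content of Remark \ref{ElementaryNisSquareRem_12}, granted the hypotheses of \cite[Prop.2.6]{C-TO} verified in item (ii) of the corollary.

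The only non-formal step is condition (iv), that $h(1)$ is a unit in $\mathcal O$. I expect this to be the main, though still short, obstacle. The argument I would run is as follows. By item (v) of Corollary \ref{ElementaryNisSquareNew_12}, the ideal $(h)\subset \mathcal O[t]$ is precisely the image of the principal ideal $(f')$ under $\sigma^*$; equivalently, the closed subscheme $\mathcal Z'=V(f')\subset\mathcal X$ maps isomorphically via $\sigma$ onto $V(h)\subset\mathbb A^1\times U$. By item (e) of Theorem \ref{equating3}, the fibre $\mathcal D_1=\sigma^{-1}(\{1\}\times U)$ is disjoint from $\mathcal Z'$, so $V(h)\cap(\{1\}\times U)=\emptyset$ in $\mathbb A^1\times U$. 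This disjointness translates to the equality of ideals $(h)+(t-1)=\mathcal O[t]$, which forces $h(1)\in\mathcal O^\times$.

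Putting these four verifications together yields the diagram \eqref{SquareDiagram2_2_2} with all the required properties, completing the proof. Thus the whole proof is a compact bookkeeping exercise once Theorem \ref{equating3} and Corollary \ref{ElementaryNisSquareNew_12} are in hand; the geometric and group-theoretic content has already been absorbed into those two statements, and in particular the construction of $\Phi$ (which is the heart of the matter) is delegated to Theorem \ref{equating3_triples} via Theorem \ref{equating3}.
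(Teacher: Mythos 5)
Your proposal is correct and follows essentially the same route as the paper: apply Theorem \ref{equating3}, use Corollary \ref{ElementaryNisSquareNew_12} together with Remark \ref{ElementaryNisSquareRem_12} for the distinguished square, perform the renaming, and verify the five conditions. Your explicit derivation of (iv) from the disjointness $\mathcal D_1\cap\mathcal Z'=\emptyset$ (equivalently $(h)+(t-1)=\mathcal O[t]$) is exactly the argument the paper compresses into the phrase ``follows from items (e), (f) and (a) of Theorem \ref{equating3}''; the only small point you leave implicit is that items (iii) and (iv) of Corollary \ref{ElementaryNisSquareNew_12} are also what make $\delta\colon U\to Y$ and $(p_X)|_{Y_h}\colon Y_h\to X_{\text f}$ well defined.
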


\begin{proof}
The items \rm{(i)} and \rm{(iv)} of
the Corollary \ref{ElementaryNisSquareNew_12}
show that the morphisms $\delta(U): U\to Y$ and $(p_X)|_{Y_h}: Y_h\to X_{\text{f}}$ are well defined.
%The item \rm{(iv)} of the Corollary \ref{ElementaryNisSquareNew_1}
%shows that the morphism  is well defined.
The items \rm{(i)}, \rm{(ii)} of that Corollary show that the left hand side square in the diagram
(\ref{SquareDiagram2_2_2})
is an elementary {\bf distinguished} square in the category of smooth $U$-schemes in the sense of
\cite[Defn.3.1.3]{MV}.
The equalities $p_X\circ \delta=can$ and $\tau\circ \delta=i_0$ are obvious.
The property (iv) of the polinomial $h$ follows from the items
(e),(f) and (a) of Theorem \ref{equating3}.
The property (v) of the isomorphism $\Phi$ follows from
the item (g) of  Theorem \ref{equating3}.
\end{proof}

%such that
%$(\mathcal X^{\prime},f^{\prime},\Delta^{\prime})$
%is a special nice triple over $U$ and
%the condition $(2)$ to $(4)$
%from Theorem \ref{ThEquatingGroups_1}
%are full filled and additionally
%there is an $\mathcal X$-group scheme isomorphism
%\begin{equation}
%\label{KeyEquation_2}
%\Phi: (q_U\circ \theta)^*(\bG_U)=\theta^*(\bG_{\const}) \to
%\theta^*(\bG_{\mathcal X})=(q_X \circ \theta)^*(\bG)
%%=(q_X \circ \theta)^*(G)=(q^{\prime}_X)^*(G)=: G_{\mathcal X^{\prime}} \ (see (\ref{QprimeX}))
%\end{equation}
%of $\mathcal X^{\prime}$-group schemes such that
%$(\Delta^{\prime})^*(\Phi)=\id_{\bG_U}$.

\section{Second application of the theory of nice triples}
\label{Section_Proof_Of_Theorem_1_5}
\begin{proof}[Proof of Theorem \ref{MainHomotopyIntrod}]
The $k$-algebra $\mathcal O$ is of the form
$\mathcal O_{X, \{x_1,x_2,\dots,x_n\}}$,
where $X$ is a $k$-smooth irreducible affine variety.
We may and will assume in this proof that
the reductive group scheme $\bG$ and the principal $\bG$-bundle
$\mathcal G$
are both defined over the variety $X$.
Futhermore we may and will assume that
there is given a non-zero function
$\textrm{f} \in k[X]$
such that the $\bG$-bundle
${\mathcal G}$ is trivial on
$X_\textrm{f}$
and the function
$\textrm{f}$
vanishes at each point $x_i$ in
$\{x_1,x_2,\dots,x_n\}$.
By Theorem
\ref{Major2}
there is a diagram of the form (\ref{SquareDiagram2_2})
enjoying  the properties (i) to (iv) from Theorem
\ref{Major2}. Moreover there
is a $Y$-group scheme isomorphism
$\Phi:  p^*_U(\bG_U)\to p^*_X(\bG)$
such that
$\delta^*(\Phi)= id_{\bG_U}$.
%Let $f^{\prime}$, $h$, $\sigma$ and $g$
%be as in Corollary
%\ref{ElementaryNisSquareNew_12}.

Consider the commutative diagram (\ref{SquareDiagram2_2_2}).
Given a $\bG$-bundle $\mathcal G$ over $X$, which is trivial
on $X_{\textrm{f}}$ take its pull-back $p^{*}_X(\mathcal G)$ to $Y$.
%Let $\Phi: q^*_U(\bG_U)\to q^*_X(\bG)$ be the $\mathcal X$-group isomorphism
%from Theorem
%\ref{equating3}.
Using the isomorphism $\Phi$ we may and will regard
the $p^*_X(\bG)$-bundle $p^{*}_X(\mathcal G)$
as a $p^*_U(\bG_U)$-bundle,
t.e. as a $\bG_U$-bundle.
We will denote that
$\bG_U$-bundle
by
$_{_{U}}p^{*}_X(\mathcal G)$.
%The equalities
%$q_U\circ \Delta=id_U$
%and
%$q_X\circ \Delta=can$
%yield the equalities
%$$\bG_U=\Delta^*(q^*_X(\bG)) \ \ \text{and} \ \ can^*(\mathcal G)=\Delta^*(q^*_X(\mathcal G)).$$
%There are two $\bG_U$-bundles over $U$. Namely, the $\bG_U$-bundle
%$\Delta^*(q^*_X(\mathcal G)))=can^*(\mathcal G)$
%and the $\bG_U$-bundle $\Delta^*(_{_{U}}q^{*}_X(\mathcal G))$.
%{\it They coincide} since $\Delta^*(\Phi)=id_{\bG_U}$.

%such that
%$\Delta^*(\Phi)=id_{\bG_U}$.
The $\bG$-bundle is trivial on $X_{\text{f}}$.
Hence
the $p^*_X(\bG)$-bundle $p^{*}_X(\mathcal G)$
is trivial on
$Y_{p^*_X{(\textrm{f}})}$.
Thus
the $\bG_U$-bundle $_{_{U}}p^{*}_X(\mathcal G)$
is trivial on
$Y_{p^*_X{(\textrm{f}})}$.
%By the property $(iv)$ of the Corollary \ref{ElementaryNisSquareNew_12}
Hence
it is trivial also on $Y_h$.

Take a trivial $\bG_U$-bundle over $(\Aff^1_{U})_h$ and glue it with the $\bG_U$-bundle
$_{_{U}}q^{*}_X(\mathcal G)|_Y$ patching over $Y_h$
(it can be done due to Theorem
\ref{Major2}(i)
).
We get a
$\bG_U$-bundle $\mathcal G_t$ over $\Aff^1_{U}$ which has particularly the following properties:\\
$(a)$ the restriction of $\mathcal G_t$ to $(\Aff^1_{U})_h$ is trivial (by the construction);\\
$(b)$ there is an isomorphism $\psi:\ _{U}q^{*}_X(\mathcal G)|_Y \to \sigma^*_g(\mathcal G_t)$ of the $\bG_U$-bundles;\\
%the $\bG_U$-bundle $\sigma^*_g(\mathcal G_t)$ is isomorphic to the $\bG_U$-bundle $_{_{U}}q^{*}_X(\mathcal G)|_{\mathcal X_g}$.\\
It remains to check that
%$(b^{\prime})$
the restriction of the $\bG_U$-bundle
$\mathcal G_t$ to $0\times U$ is isomorphic to the
$\bG_U$-bundle $can^*(\mathcal G)$. To do that note that
Theorem
\ref{Major2}(ii) and Theorem
\ref{Major2}(iii)
%the equalities
%$q_U\circ \delta=id_U$
%and
%$q_X\circ \delta=can$
yield the equalities
$$\bG_U=\delta^*(q^*_X(\bG)) \ \ \text{and} \ \ can^*(\mathcal G)=\delta^*(q^*_X(\mathcal G)).$$
There are two interesting $\bG_U$-bundles over $U$. Namely, the $\bG_U$-bundle
$can^*(\mathcal G)=\delta^*(q^*_X(\mathcal G))$
and the $\bG_U$-bundle $\delta^*(_{_{U}}q^{*}_X(\mathcal G))$.
{\it They coincide} since $\delta^*(\Phi)=id_{\bG_U}$.
%{\bf By the property $(iii)$ of the corollary \ref{ElementaryNisSquareNew_12}
%one has the inclusion $\delta(U)\subset Y$.
%}
Thus
$$can^*(\mathcal G)=\delta^*(_{_{U}}q^{*}_X(\mathcal G))\cong \delta^*(\sigma^*_g(\mathcal G_t))={\mathcal G_t}|_{0\times U},$$
where the middle $\bG_U$-bundle isomorphism is the isomorphism
$\delta^*(\psi)$.
%The middle isomorphism is well-defined by the property $(iii)$ of Corollary
%\ref{ElementaryNisSquareNew_1}.
The latter equality holds
%also
by Theorem
\ref{Major2}(iii).
%\ref{ElementaryNisSquareNew_12}.
%$\sigma_g\circ \Delta=i_0: U \to \Aff^1\times U$,
%which is due the properties $(a')$ and $(d)$ from Theorem
%\ref{ElementaryNisSquare_1}.
%To check the property $(b^{\prime})$ use the property $(b)$, which is just above,
%the property $(c)$ of the Corollary and the properties
%$(a')$ and $(d)$
%of Theorem
%\ref{ElementaryNisSquare_1}.
%and the equality
%$\Delta^{*}(\Phi)=id_{\bG_U}$.
Whence the Theorem \ref{MainHomotopyIntrod}.

\end{proof}
%in the case of a constant reductive group scheme.

%{\it Warring: }
%If the group-scheme $\bG$ does not come from the ground field
%we need to take care about equating two its pull-backs
%$(pr_{U} \circ \sigma)^*(\bG)$
%and
%$q_{X}^*(\bG)$
%to
%$\mathcal X$.
%By the property $(a)$ from Theorem \ref{ElementaryNisSquare_1} the scheme
%$\mathcal Z^{\prime}$
%is a closed subscheme in
%the affine line $\Aff^1_U$ (via the inclusion $\sigma|_{\mathcal Z^{\prime}}$).
%are isomorphic.
%But the latter one is a closed subscheme
%of the affine line $\Aff^1_U$. Thus the first one
%is a closed subscheme of $\Aff^1_U$.
%If the field $k$ is finite, this yields certain restrictions on $\mathcal Z^{\prime}$.
%For instance if the residue field $k(x)$ coincides with $k$,
%then for each integer $d\geq 1$ the amount of the degree $d$-points on the scheme
%$\sigma^{-1}(\mathcal Z)$
%must be less or equal to the amount of the degree $d$ points on $\Aff^1_{k(x)}$.

%{\it Due to these requirements}

\begin{rem}
%If $\mathcal Z$ is a divisor then
Here is {\it the motivic view point} on the above arguments (in the constant case).
The distinguished elementary square (\ref{SquareDiagram2_1})
defines a motivic space isomorphism
$\mathcal X_g/\mathcal X_{gh}\xleftarrow{\sigma} \Aff^1_U/(\Aff^1_U)_{h}$
(just a Nisnevich sheaf isomorphism),
hence there is a composite morphism of motivic spaces of the form
%(\ref{DeformationDiagram0}),
%subjecting the condition $(i)$ to $(iii)$ defines a motivic space morphism
%$\varphi: \bP^1_U/(\infty \times U) \to X/(X-Z)$.
%Namely, it is the morphism
$$\varphi: \Aff^1_U/(\Aff^1_U)_{h} \xrightarrow{\sigma^{-1}}
\mathcal X_g/\mathcal X_{gh} \to \mathcal X_g/\mathcal X_{q^*_X{(\textrm{f}})}  \xrightarrow{q}  X/X_{\textrm{f}}.$$
Let $i_0: 0\times U \to \Aff^1_U/(\Aff^1_U)_h$ be the natural morphism. By the properties (a') and (d) from Theorem
\ref{equating3}
%by the condition $(iii)$
the morphism
$\varphi \circ i_0$ equals to the one
$$U\xrightarrow{can} X \xrightarrow{p} X/X_{\textrm{f}},$$
where
%$can: U\to X$ and
$p: X\to X/X_{\textrm{f}}$
is the canonical morphisms.

Now assume that $\bG_0$ is a reductive group scheme over the field $k$.
A $\bG_0$-bundle over $X$, trivialized on $X_{\textrm{f}}$, is "classified" by a morphism
$\rho: X/X_{\textrm{f}} \to (B\bG_0)_{et}$ in an appropriate category.
Thus the morphism
$\rho\circ \varphi$
"classifies" a $\bG_0$-bundle $\mathcal G_t$ over $\Aff^1_U$ trivialized on $(\Aff^1_U)_h$.
The equality
$\varphi \circ i_0=p\circ can$
shows that the $\bG_0$-bundles $\mathcal G_t|_{0\times U}$ and $can^*(\mathcal G)$
are isomorphic. This "proves" Theorem \ref{MainHomotopyIntrod} in the constant case.
%The fact that $\bP^1_U$ trivialized on $\infty \times U$ "yield"
%that the $\bG$-bundle $\mathcal G_t$ is trivialized in a neighborhood of
%$\infty \times U$. This completes the sketch of the proof.
\end{rem}

\section{An extension of Theorem \cite[Thm 1.1]{PSV}}\label{PSVfinite_f}
\begin{thm}\label{PSV}
Let $k$ be a field. Let $\mathcal O$ be the semi-local ring of finitely many closed points on a $k$-smooth irreducible affine $k$-variety $X$
and let $K$ be its field of fractions. Let $\bG$ be an isotropic simple simply connected group scheme over $\mathcal O$.
Then for any Noetherian $k$-algebra $A$ the map
$$\text{H}^1_{et}(\mathcal O \otimes_k  A,G) \to \text{H}^1_{et}(K\otimes_k  A,G),$$
induced by the inclusion $\mathcal O$ into $K$, has trivial kernel.
\end{thm}

\begin{proof}
If the field $k$ is infinite, this theorem is exactly Theorem
\cite[Thm 1.1]{PSV}. So, there is nothing to prove in this case.
If the field $k$ is finite, then
repeat literally the proof of
\cite[Thm 1.1]{PSV} and replace the
reference to
\cite[Thm 1.2]{PSV}
with the reference to Theorem \ref{PSV2}.
\end{proof}

\begin{thm}\label{PSV2}
Let $k$, $\mathcal O$, $K$, $A$ be the same as in Theorem \ref{PSV}.
Let $G$ be a not necessarily isotropic simple simply connected group
scheme over $\mathcal O$.
Let $\mathcal G$ be a principal $G$-bundle over $\mathcal O \otimes_k A$ which is trivial
over $K \otimes_k A$.
Then there exists a principal $G$-bundle $\mathcal G_t$ over
$\mathcal O[t] \otimes_k A$
and a monic polynomial
$f(t) \in \mathcal O[t]$
such that
\par
(i) the $G$-bundle $\mathcal G_t$ is trivial over $(\mathcal O[t]_f) \otimes_k A$,
\par
(ii) the evaluation of $\mathcal G_t$ at $t=0$ coincides
with the original $G$-bundle $\mathcal G$,
\par
(iii) $f(1) \in \mathcal O$ is invertible in $\mathcal O$.
\end{thm}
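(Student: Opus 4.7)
The plan is to parallel the proof of Theorem \ref{MainHomotopyIntrod}, keeping track of the extra parameter algebra $A$ throughout, and exploiting the fact that the geometric input supplied by Theorem \ref{Major2} is purely over $k$ and hence behaves well under base change by $A$.

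The first step is a spreading-out. The reductive group scheme $G$ over the semi-local $\mathcal{O}$ is of finite presentation, so after shrinking $X$ (while keeping the closed points $x_1,\dots,x_n$) one may assume that $G$ is the restriction to $U=\spe(\mathcal{O})$ of a reductive $X$-group scheme $\bG$. Similarly, the principal $G$-bundle $\mathcal{G}$ over $\mathcal{O}\otimes_k A$ spreads (after further shrinking $X$) to a principal $\bG_{X\otimes_k A}$-bundle $\mathcal{G}'$ over $X\otimes_k A$; its triviality over $K\otimes_k A$ propagates to triviality over $X_{\textrm{f}}\otimes_k A$ for some non-zero $\textrm{f}\in k[X]$ vanishing at each $x_i$. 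Here we use Noetherianity of $A$ to control the locus where spreading succeeds.

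Next, we apply Theorem \ref{Major2} to the data $(X,\{x_1,\dots,x_n\},\textrm{f},\bG)$. This produces the diagram (\ref{SquareDiagram2_2_2}) with an irreducible affine $U$-smooth $Y$, a section $\delta:U\to Y$, a monic polynomial $h\in\mathcal{O}[t]$ with $h(1)$ a unit, and a $Y$-group scheme isomorphism $\Phi:p_U^*(\bG_U)\to p_X^*(\bG)$ satisfying $\delta^*(\Phi)=\id_{\bG_U}$. All of this lives over $k$. Tensoring the entire picture with $A$ over $k$, the left-hand square of (\ref{SquareDiagram2_2_2}) remains an elementary distinguished square in the category of affine $(U\otimes_k A)$-smooth schemes (elementary distinguished squares are stable under arbitrary base change), the pair $(h,h(1))$ is unaffected, and $\Phi\otimes_k A$ remains a group scheme isomorphism whose pullback along $\delta\otimes_k A$ is the identity.

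Finally, we repeat the gluing argument of the proof of Theorem \ref{MainHomotopyIntrod} verbatim, but now with each scheme replaced by its $-\otimes_k A$ version. Pull back $\mathcal{G}'$ to $Y\otimes_k A$, reinterpret the resulting $p_X^*(\bG)$-bundle as a $p_U^*(\bG_U)$-bundle via $\Phi\otimes_k A$, and observe that it is trivial on $(Y\otimes_k A)_{p_X^*(\textrm{f})}$, hence on $Y_h\otimes_k A$. Then glue it with the trivial $\bG_U$-bundle over $(\Aff^1_U)_h\otimes_k A$ across the elementary distinguished square to produce a principal $G$-bundle $\mathcal{G}_t$ on $\mathcal{O}[t]\otimes_k A$. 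Property (i) is immediate from the construction; property (ii) follows from $\tau\circ\delta=i_0$, $p_X\circ\delta=\can$, together with $\delta^*(\Phi)=\id_{\bG_U}$, exactly as in Theorem \ref{MainHomotopyIntrod}; property (iii) is Theorem \ref{Major2}(iv).

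The main obstacle is the opening spreading-out step, since one must simultaneously extend $G$ from $\mathcal{O}$ to a reductive group scheme on a single open in $X$, extend $\mathcal{G}$ from $\mathcal{O}\otimes_k A$ to a bundle on an open of $X\otimes_k A$, and arrange that the locus of triviality extends to the pullback of a single function $\textrm{f}\in k[X]$ vanishing at each $x_i$. Once this is done, all the geometric inputs from Theorem \ref{Major2} commute with $-\otimes_k A$ in the required sense, and the remainder of the argument is a formal repetition of the proof of Theorem \ref{MainHomotopyIntrod}.
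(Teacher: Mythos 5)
Your proposal is correct and takes essentially the same route as the paper, which simply instructs the reader to repeat the argument of Theorem \ref{MainHomotopyIntrod} verbatim with the extra parameter ring $A$. The only substantive addition you make is to spell out the spreading-out step and to observe that the geometric output of Theorem \ref{Major2} (the diagram (\ref{SquareDiagram2_2_2}), the monic $h$, and the isomorphism $\Phi$) is constructed entirely over $k$ and therefore survives the flat base change $-\otimes_k A$; in particular the patching conditions of Corollary \ref{ElementaryNisSquareNew_12}(ii) are preserved because $h$ is monic, which is what the gluing step actually requires.
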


\begin{proof}[Proof of Theorem \ref{PSV2}]
If $A=k$, then Theorem \ref{PSV2} coincides with Theorem
\ref{MainHomotopyIntrod} and there is nothing to prove.
The general case we left to the reader (follow literally the arguments
from the proof of Theorem
\ref{MainHomotopyIntrod}
given in Section
\ref{Section_Proof_Of_Theorem_1_5}
).

\end{proof}

\end{document}